\documentclass{amsart}

\usepackage{amsmath}
\usepackage{amsthm}
\usepackage{latexsym}
\usepackage{braket}
\usepackage{graphicx}
\usepackage{stmaryrd}
\usepackage{xcolor}
\usepackage[caption=false,font=footnotesize]{subfig}

\newtheorem{proposition}{Proposition}
\newtheorem{algorithm}{Algorithm}
\theoremstyle{definition}
\newtheorem{remark}{Remark}
\newtheorem*{notation}{Notation}

\definecolor{blue}{RGB}{16,78,139}

\begin{document}

\title[Computing distances and geodesics between curves in a manifold]{Computing distances and geodesics between manifold-valued curves in the SRV framework}

\author{Alice Le Brigant}
\address{Institut Math\'ematique de Bordeaux, UMR 5251, 
Universit\'e de Bordeaux and CNRS, France}
\email{alice.lebrigant@math.u-bordeaux.fr}

%-----------------------------------------------------------------------------------------------------------------------------------------------------------------------------
\begin{abstract}
%-----------------------------------------------------------------------------------------------------------------------------------------------------------------------------

This paper focuses on the study of open curves in a Riemannian manifold $M$, and proposes a reparametrization invariant metric on the space of such paths. We use the square root velocity function (SRVF) introduced by Srivastava et al. in \cite{sri} to define a Riemannian metric on the space of immersions $\mathcal{M}=\text{Imm}([0,1],M)$ by pullback of a natural metric on the tangent bundle $\text{T}\mathcal{M}$. This induces a first-order Sobolev metric on $\mathcal{M}$ and leads to a distance which takes into account the distance between the origins in $M$ and the $L^2$-distance between the SRV representations of the curves. The geodesic equations for this metric are given and exploited to define an exponential map on $\mathcal M$. The optimal deformation of one curve into another can then be constructed using geodesic shooting, which requires to characterize the Jacobi fields of $\mathcal M$. The particular case of curves lying in the hyperbolic half-plane $\mathbb H$ is considered as an example, in the setting of radar signal processing.
\end{abstract}

\maketitle

%-----------------------------------------------------------------------------------------------------------------------------------------------------------------------------
\section{Introduction}
%-----------------------------------------------------------------------------------------------------------------------------------------------------------------------------

Computing distances between shapes of open or closed curves is of interest in many applications, from medical imaging to radar signal processing, as soon as one wants to compare, classify or statistically analyze trajectories or contours of objects. While the shape of an organ or the trajectory of an object on a short distance can be modeled by a curve in the plane $\mathbb R^2$ or in the ambient space $\mathbb R^3$, some applications provide curves in an intrinsically non-flat space. Simple examples in positive curvature include trajectories on the sphere where points represent positions on the earth, and a negatively-curved space of interest in signal processing is the hyperbolic half plane, which as we will explain later coincides with the statistical manifold of Gaussian densities. We are motivated by the study of curves in the latter for signal processing purposes, however our framework is more general. 

Here we consider open oriented curves in a Riemannian manifold $M$, more precisely the space of smooth immersions $c : [0,1] \rightarrow M$,
\begin{equation*}
\mathcal{M}=\text{Imm}([0,1],M) = \{ c\in C^\infty([0,1],M),\, c'(t) \neq 0 \,\, \forall t\in [0,1] \}.
\end{equation*}
To compare or average elements of this space, one way to proceed is to equip $\mathcal M$ with a Riemannian structure, that is to locally define a scalar product $G$ on its tangent space $T\mathcal M$. A property that is usually required of this metric is reparametrization invariance, that is that the metric be the same at all points of $\mathcal M$ representing curves that are identical modulo reparametrization. Two curves are identical modulo reparametrization when they pass through the same points of $M$ but at different speeds. Reparametrizations are represented by increasing diffeomorphisms $\phi : [0,1] \rightarrow [0,1]$ (so that they preserve the end points of the curves), and their set is denoted by $\text{Diff}^+([0,1])$. Elements $h,k \in T_c\mathcal M$ of the tangent space in $c\in \mathcal M$ are infinitesimal deformations of $c$ and can be seen as vector fields along the curve $c$ in $M$ (this results from the so called "Exponential law" for smooth functions, see e.g. \cite{kri}, Theorem 5.6.). The Riemannian metric $G$ is reparametrization invariant if the action of $\text{Diff}^+([0,1])$ is isometric for $G$
\begin{equation}
\label{reparinv}
G_{c\circ \phi}(h\circ \phi, k\circ \phi)=G_c(h,k),
\end{equation}
for any $c\in \mathcal{M}$, $h,k \in T_c\mathcal{M}$ and $\phi \in \text{Diff}^+([0,1])$. This is often called the \emph{equivariance property}, and it guarantees that the induced distance between two curves $c_0$ and $c_1$ does not change if we reparametrize them by the same diffeomorphism $\phi$
\begin{equation*}
d(c_o\circ\phi,c_1\circ\phi) = d(c_0,c_1).
\end{equation*}
What's more, a reparametrization invariant metric on the space of curves induces a Riemannian structure on the "shape space", where the space of reparametrizations is quotiented out. A shape can be seen as the equivalence class of all the curves that are identical modulo a change of parameterization, and the shape space as the associated quotient space
\begin{equation*}
\mathcal{S}=\text{Imm}([0,1],M)/\text{Diff}^+([0,1]).
\end{equation*}
While the space of immersions is an open submanifold of the Fr\'echet manifold $C^\infty([0,1],M)$ (see \cite{mich0}, Theorem 10.4.), the shape space is not a manifold and therefore the fiber bundle structure we discuss next is to be understood formally. We get a principal bundle structure $\pi : \mathcal{M} \rightarrow \mathcal{S}$, which induces a decomposition of the tangent bundle $T\mathcal{M}=V\mathcal{M} \oplus H\mathcal{M}$ into a vertical subspace $V\mathcal{M}=\ker(T\pi)$ consisting of all vectors tangent to the fibers of $\mathcal{M}$ over $\mathcal{S}$, and a horizontal subspace $H\mathcal{M}=\left(V\mathcal{M}\right)^{\perp_G}$ defined as the orthogonal complement of $V\mathcal{M}$ according to the metric $G$ that we put on $\mathcal{M}$. If $G$ verifies the equivariance property, then it induces a Riemannian metric $\hat G$ on the shape space, for which the geodesics are the projected horizontal geodesics of $\mathcal M$ for $G$. The geodesic distance $\hat d$ on $\mathcal S$ between the shapes $[c_0]$ and $[c_1]$ of two given curves $c_0$ and $c_1$ is then given by 
\begin{equation*}
\hat d\left( [c_0] , [c_1] \right) = \inf \left\{\, d\left(c_0, c_1\circ \phi \right) \, | \, \, \phi \in \text{Diff}^+([0,1]) \, \right\},
\end{equation*}
and $\hat d$ verifies the stronger property
\begin{equation*}
\hat d(c_0\circ\phi,c_1\circ\psi)=\hat d(c_0,c_1),
\end{equation*}
for any reparametrizations $\phi,\psi\in\text{Diff}^+([0,1])$. This motivates the choice of a reparametrization invariant metric on $\mathcal M$.

Riemannian metrics on the space of curves lying in the Euclidean space $\mathbb R^n$, and especially closed curves $c:S^1 \rightarrow \mathbb R^n$ ($S^1$ is the circle), have been widely studied (\cite{younes1}, \cite{mich2}, \cite{mich3}, \cite{bauer2}). The most natural candidate for a reparametrization invariant metric is the $L^2$-metric with integration over arc length $\mathrm d\ell = \left\|c'(t)\right\| \mathrm dt$
\begin{equation*}
G^{L^2}_c(h,k) = \int \langle h , k \rangle \, \mathrm d\ell,
\end{equation*}
but Michor and Mumford have shown in \cite{mich1} that the induced metric on the shape space always vanishes. This has motivated the study of Sobolev-type metrics (\cite{mich3}, \cite{men}, \cite{younes2}, \cite{bauer1}), where higher order derivatives are introduced. Local existence and uniqueness of geodesics for first and second-order Sobolev metrics on the space of closed plane curves were  shown in \cite{mich3} and completion results were given in \cite{men}. One first-order Sobolev metric where different weights are given to the tangential and normal parts of the derivative has proved particularly interesting for the applications (\cite{laga}, \cite{su})
\begin{equation}
\label{sobolev}
G_c(h,k)=\int \langle \, D_{\ell}h^N,D_\ell k^N \,\rangle + \frac{1}{4} \langle \, D_\ell h^T, D_\ell k^T \, \rangle \,\, \mathrm d\ell.
\end{equation}
In that case $c$ is a curve in $\mathbb R^n$, $\langle \cdot,\cdot \rangle$ denotes the Euclidean metric on $\mathbb{R}^n$, $\, D_\ell h=h'/\left\| c' \right\| \,$ is the derivation of $h$ according to arc length, $D_\ell h^T=\langle D_\ell h,v\rangle v \,$ is the projection of $D_\ell h$ on the unit speed vector field $\, v=c'/\|c'\|$, and $\, D_sh^N=D_sh-D_sh^T $. This metric belongs to the class of so-called \emph{elastic} metrics, defined by
\begin{equation*}
G^{a,b}_c(h,k)=\int a^2\langle \, D_{\ell}h^N,D_\ell k^N \,\rangle + b^2 \langle \, D_\ell h^T, D_\ell k^T \, \rangle \,\, \mathrm d\ell,
\end{equation*}
for any weights $a, b \in \mathbb R_+$. The parameters $a$ and $b$ respectively control the degree of bending and stretching of the curve. Srivastava et al. introduced in \cite{sri} a convenient framework to study the case where $a=1$ and $b=1/2\,$ by showing that metric \eqref{sobolev} could be obtained by pullback of the $L^2$-metric via a simple transformation $R$ called the Square Root Velocity Function (SRVF), which associates to each curve its velocity renormalized by the square root of its norm. A similar idea had been previously introduced in \cite{younes1} and then used in \cite{younes2}, where a Sobolev metric is also mapped to an $L^2$-metric. The general elastic metric $G^{a,b}$ with weights $a$ and $b$ satisfying $4b^2\geq a^2$ can also be studied using a generalization of the SRVF \cite{bauer2}.

The SRV framework can be extended to curves in a Lie groupe using translations \cite{cell}, and to curves in a general manifold using parallel transport. For manifold-valued curves, this can be done in a way that enables to move the computations to the tangent space to the origin of one of the two curves under comparison \cite{lb}, \cite{su}, \cite{zhang}. In \cite{lb} the authors consider the general elastic metric $G^{a,b}$, but no Riemannian framework is given. In \cite{zhang}, a Riemannian framework is given for the case $a=1$, $b=1/2$, and the geodesic equations are derived. In this paper we also restrict to this particular choice of coefficients $a$ and $b$ for simplicity, but we propose another generalization of the SRV framework to manifold-valued curves. Instead of encoding the information of each curve within a tangent space at a single point as in \cite{lb} and \cite{zhang} using parallel transport, the distance is computed in the manifold itself which enables us to be more directly dependent on its geometry. Intuitively, the data of each curve is no longer concentrated at any one point, and so the energy of the deformation between two curves takes into account the curvature of the manifold along the entire "deformation surface", not just along the path traversed by the starting point of the curve.

In the following section, we introduce our metric as the pullback of a quite natural metric on the tangent bundle $T\mathcal M$, and show that it induces a fiber bundle structure over the manifold $M$ seen as the set of starting points of the curves. In section $3$, we give the induced geodesic distance and highlight the difference with respect to the distance introduced in \cite{zhang}. In section 4, we give the geodesic equations associated to our metric and exploit them to build the exponential map. Geodesics of the space of curves can then be computed using geodesic shooting. To this end, we describe the Jacobi fields on $\mathcal M$. We test these algorithms on curves lying in the hyperbolic half-plane $\mathbb H$, a choice that we motivate in section 5. Finally, in the setting of radar spectral analysis, we model locally stationary radar signals by curves in $\mathbb H$ and compute their mean.

%-----------------------------------------------------------------------------------------------------------------------------------------------------------------------------
\section{Extension of the SRV framework to manifold-valued curves}
%-----------------------------------------------------------------------------------------------------------------------------------------------------------------------------

\subsection{Our metric on the space of curves} %--------------------------------------------------------------------------------------------------------------

Let $c : [0,1] \rightarrow M$ be a curve in $M$ and $h,k \in T_c\mathcal M$ two infinitesimal deformations. We consider the following first-order Sobolev metric on $\mathcal M$
\begin{equation*}
G_c(h,k) = \Braket{ \, h(0) , k(0) \, } + \int \Braket {\nabla_\ell h^N,\nabla_\ell k^N }  + \frac{1}{4} \Braket{ \nabla_\ell h^T, \nabla_\ell k^T } \, \mathrm d\ell,
\end{equation*}
where we integrate according to arc length $\mathrm d\ell = \|c'(t)\|\mathrm dt$, $\langle \cdot, \cdot \rangle$ and $\nabla$ respectively denote the Riemannian metric and the associated Levi-Civita connection of the manifold $M$, $\nabla_\ell h=\frac{1}{\left\| c' \right\|}\nabla_{c'}h$ is the covariant derivative of $h$ according to arc length, and $\nabla_\ell h^T=\langle \nabla_\ell h,v\rangle v \,$ and $\, \nabla_\ell h^N=\nabla_\ell h-\nabla_\ell h^T$ are its tangential and normal components respectively, with the notation $v=c'/\|c'\|$. If $M$ is a flat Euclidean space, we obtain the metric \eqref{sobolev} studied in \cite{sri}, with an added term involving the origins. Without this extra term, the bilinear form $G$ is not definite since it vanishes if $h$ or $k$ is covariantly constant along $c$. Here we show that $G$ can be obtained as the pullback of a very natural metric $\tilde G$ on the tangent bundle $T\mathcal M$. We consider the square root velocity function (SRVF, introduced in \cite{sri}) on the space of curves in $M$,
\begin{equation*}
R : \mathcal{M} \rightarrow T\mathcal{M}, \quad c \mapsto \frac{c'}{\sqrt{\left\|c'\right\|}},
\end{equation*}
where $\left\| \cdot \right\|$ is the norm associated to the Riemannian metric on $M$. In order to define $\tilde G$, we introduce the following projections from $TTM$ to $T M$. Let $\xi \in T_{(p,u)}T M$ and $t \mapsto (x(t),U(t))$ be a curve in $T M$ that passes through $(p,u)$ at time $0$ at speed $\xi$. Then we define the vertical and horizontal projections
\begin{eqnarray*}
\text{vp}_{(p,u)} &:& T_{(p,u)}T M \rightarrow T_pM, \quad \xi \mapsto \xi_V := \nabla_{x'(0)}U, \\
\text{hp}_{(p,u)} &:& T_{(p,u)}T M \rightarrow T_pM, \quad \xi \mapsto \xi_H := x'(0).
\end{eqnarray*}
The horizontal and vertical projections live in the tangent bundle $TM$ and are not to be confused with the horizontal and vertical parts which live in the double tangent bundle $TTM$ and will be denoted by $\xi^H$, $\xi^V$. Furthermore, let us point out that the horizontal projection is simply the differential of the natural projection $T M \rightarrow M$, and that according to these definitions, a very natural metric on the tangent bundle $TM$, the Sasaki metric (\cite{sas1}, \cite{sas2}), can be written
\begin{equation*}
g^{S}_{(p,u)}(\xi,\eta) = \Braket{\, \xi_H\, ,\, \eta_H \,} + \Braket{\, \xi_V\, ,\, \eta_V \,},
\end{equation*}
where $\langle \cdot,\cdot \rangle$ denotes the Riemannian metric on $M$. Now we can define the metric that we put on $T\mathcal{M}$. Let us consider $\, h \in T\mathcal{M}\,$ and $\, \xi, \eta \in T_hT\mathcal{M \,}$. We define
\begin{equation*}
\tilde G_h\left(\xi,\eta\right) \,= \, \Braket{\, \xi(0)_H\, ,\, \eta(0)_H } \,+\, \int_0^1 \Braket{ \, \xi(t)_V \, , \, \eta(t)_V } \, \mathrm dt,
\end{equation*}
where $\, \xi(t)_H=\text{hp}(\xi(t))\in TM \,$ and $\, \xi(t)_V=\text{vp}(\xi(t))\in TM \,$ are the horizontal and vertical projections of  $\, \xi(t) \in TTM \,$ for all $t$. Then we have the following result.
%-------------------PROPOSITION----------------
\begin{proposition}
The metric $G$ on the space of curves $\mathcal M$ can be obtained as pullback of the metric $\tilde G$ by the square root velocity function $R$, that is
\begin{equation*}
G_c(h,k) = \tilde G_{R( c)}\left( T_cR(h) , T_cR(k) \right),
\end{equation*}
for any curve $c\in \mathcal{M}$, and vector fields $h,k \in T_c\mathcal{M}$ along $c$.
\end{proposition}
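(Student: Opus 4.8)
The plan is to compute $T_cR(h)$ explicitly as a tangent vector to $T\mathcal{M}$, extract its horizontal and vertical projections, and substitute these into the definition of $\tilde G$. To represent the differential, I would introduce a variation $s \mapsto c_s$ of curves in $\mathcal{M}$ with $c_0 = c$ and $\partial_s c_s|_{s=0} = h$, so that $s \mapsto R(c_s)$ is a path in $T\mathcal{M}$ whose velocity at $s=0$ is precisely $T_cR(h)$. For each fixed $t$, the map $s \mapsto (c_s(t), R(c_s)(t))$ is then a curve in $TM$ through $(c(t), R(c)(t))$, and its velocity is the vector $\xi(t) := T_cR(h)(t) \in T_{(c(t),R(c)(t))}TM$ whose projections $\text{hp}$ and $\text{vp}$ I must identify.

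The horizontal projection is immediate: by definition $\xi(t)_H$ is the velocity of the base-point curve $s \mapsto c_s(t)$, which is $\partial_s c_s(t)|_{s=0} = h(t)$; in particular $\xi(0)_H = h(0)$. The vertical projection is the heart of the computation: by definition $\xi(t)_V = \nabla_s R(c_s)(t)|_{s=0}$, the covariant derivative along the variation of the SRV representation. Writing $R(c_s) = \|\partial_t c_s\|^{-1/2}\,\partial_t c_s$ and applying the Leibniz rule for $\nabla_s$, I would split this into a term coming from differentiating the scalar factor $\|\partial_t c_s\|^{-1/2}$ and a term $\|\partial_t c_s\|^{-1/2}\,\nabla_s\partial_t c_s$.

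The key analytic inputs are then the symmetry (torsion-freeness) of the Levi-Civita connection, which gives $\nabla_s\partial_t c_s = \nabla_t\partial_s c_s$ and hence $\nabla_s\partial_t c_s|_{s=0} = \nabla_{c'}h = \|c'\|\,\nabla_\ell h$, together with the elementary computation $\partial_s\|\partial_t c_s\|\,|_{s=0} = \langle\nabla_{c'}h, v\rangle = \|c'\|\langle\nabla_\ell h, v\rangle$. Substituting these and regrouping, the scalar-factor term is proportional to $c'$, hence purely tangential, and cancels exactly half of the tangential part of the Leibniz term. I expect to obtain the clean expression
\begin{equation*}
\xi(t)_V \,=\, \|c'(t)\|^{1/2}\left(\nabla_\ell h^N + \tfrac{1}{2}\,\nabla_\ell h^T\right),
\end{equation*}
and the analogous formula for $\eta(t)_V := T_cR(k)(t)_V$ with $k$ in place of $h$. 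This is where the weight $1/4$ of the tangential term in $G$ will originate.

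It then remains to assemble $\tilde{G}_{R(c)}(T_cR(h), T_cR(k))$. Using the orthogonality of the tangential and normal components, $\langle\xi(t)_V, \eta(t)_V\rangle = \|c'(t)\|\left(\langle\nabla_\ell h^N, \nabla_\ell k^N\rangle + \tfrac14\langle\nabla_\ell h^T, \nabla_\ell k^T\rangle\right)$, and the factor $\|c'(t)\|$ converts $\mathrm dt$ into the arc-length element $\mathrm d\ell$; together with the horizontal term $\langle\xi(0)_H, \eta(0)_H\rangle = \langle h(0), k(0)\rangle$ this reproduces $G_c(h,k)$ exactly. The main obstacle is the vertical-projection computation: one must handle the covariant derivative of the normalized velocity carefully, invoking the symmetry of $\nabla$ to transfer the $s$-derivative onto $h$, and correctly separating the tangential contribution of the norm factor, since it is precisely this separation that produces the asymmetric weights between the normal and tangential directions.
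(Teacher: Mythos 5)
Your proposal is correct and follows essentially the same route as the paper: compute $T_cR(h)$ via a variation, use torsion-freeness to get $\nabla_s\partial_t c = \nabla_t\partial_s c$, and arrive at $T_cR(h)(t)_V = \|c'\|^{1/2}\bigl(\nabla_\ell h^N + \tfrac12\nabla_\ell h^T\bigr)$, from which the weight $1/4$ and the conversion of $\mathrm dt$ into $\mathrm d\ell$ follow. The paper actually stops after computing the vertical projection and leaves the final assembly implicit, so your explicit last step is a harmless (and welcome) addition.
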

% -------------------NOTATION--------------------
\begin{notation}
Here and in all the paper we will denote by $s$ the parameter of paths in the space of curves $\mathcal M$ and by $t$ the parameter of a curve in $M$. For any path of curves $\, s \mapsto c(s,\cdot) \,$ the corresponding derivatives will be denoted by $\, c_s=\partial c/\partial s \,$ and $\, c_t=\partial c/\partial t \,$, and we will also use the notations $\nabla_s = \nabla_{\partial c/\partial s}$ and $\nabla_t = \nabla_{\partial c/\partial t}$.
\end{notation}
%-----------------------PROOF---------------------
\begin{proof}[Proof of Proposition 1]
For any $t\in[0,1]$, we have $T_cR(h)(t)_H=h(t)$ and $T_cR(h)_V=\nabla_hR(c )(t)$. To prove this proposition, we just need to compute the latter. Let $\, s \mapsto c(s,\cdot) \,$ be a curve in $\mathcal{M}$ such that $\, c(0,\cdot)=c \,$ and $\, c_s(0,\cdot)=h \,$. %Here and in all the paper we use the notations $\, c_s=\partial c/\partial s \,$ and $\, c_t=\partial c/\partial t \,$, as well as $\nabla_s = \nabla_{\partial c/\partial s}$ and $\nabla_t = \nabla_{\partial c/\partial t}$. 
Then
\begin{eqnarray*}
\nabla_hR(c )(t)&=& \frac{1}{\left\|c'\right\|^{1/2}} \nabla_h c' + h\left(\left\|c' \right\|^{-1/2}\right)c' \\
&=& \frac{1}{\left\| c_t\right\|^{1/2}} \nabla_{s} c_t+ \partial_s \Braket{\, c_t \, , \, c_t \,}^{-1/4} c_t \\
&=& \frac{1}{\left\| c_t\right\|^{1/2}} \nabla_t c_s - \frac{1}{2} \Braket{ \, c_t\, ,\,c_t\,}^{-5/4} \Braket{ \, \nabla_s c_t \, , \,c_t \,} \, c_t \\
&=& \left\|c'\right\|^{1/2} \left( \left( \nabla_\ell h\right)^N + \frac{1}{2} \langle \nabla_\ell h\, ,\, \frac{c'}{\|c'\|}\rangle \frac{c'}{\|c'\|} \right),
\end{eqnarray*}
where we used twice the inversion $\nabla_s c_t=\nabla_tc_s$.
\end{proof}

\subsection{Fiber bundle structures}%----------------------------------------------------------------------------------------------------------------------------

This choice of metric induces two fiber bundle structures. While the second one is an actual fiber bundle structure between two manifolds, the first structure is over the shape space which, as discussed in the introduction, is not a manifold, and so it should be understood formally.  Note that we could obtain a manifold structure by restricting ourselves to so-called "free" immersions, that is elements of $\mathcal M$ on which the diffeomorphism group acts freely, see \cite{mich2}.
\subsubsection*{Principal bundle over the shape space} Just as in the planar case, the fact that the square root velocity function $R$ satisfies
\begin{equation*}
R(c\circ \phi) = \sqrt{\phi'} \left(R(c ) \circ \phi \right), 
\end{equation*}
for all $c \in \mathcal{M}$, $h,k \in T_c\mathcal{M}$ and $\phi \in \text{Diff}^+([0,1])$, guarantees that the integral part of $G$ is reparametrization invariant. Remembering that the reparametrizations $\phi\in \text{Diff}^+([0,1])$ preserve the origins of the curves, we notice that $G$ is constant along the fibers and verifies the equivariance property \eqref{reparinv}. We then have a formal principal bundle structure over the shape space 
\begin{equation*}
\pi : \mathcal{M}=\text{Imm}([0,1],M) \rightarrow \mathcal{S}=\mathcal{M}/ \text{Diff}^+([0,1]).
\end{equation*}
which induces a decomposition $\, T\mathcal{M}=V\mathcal{M} \overset{\perp}{\oplus} H\mathcal{M} \,$. There exists a Riemannian metric $\hat G$ on the shape space $\mathcal{S}$ such that $\pi$ is (formally) a Riemannian submersion from $(\mathcal{M},G)$ to $(\mathcal{S},\hat G)$
\begin{equation*}
G_c(h^H,k^H) = \hat G_{\pi( c)}\left( T_c\pi(h), T_c\pi(k) \right),
\end{equation*}
where $h^H$ and $k^H$ are the horizontal parts of $h$ and $k$, as well as the horizontal lifts of $T_c\pi(h)$ and $T_c\pi(k)$, respectively. This expression does in fact define $\hat G$ in the sense that it does not depend on the choice of the representatives $c$, $h$ and $k$. For more details, see the theory of Riemannian submersions and G-manifolds in \cite{mich4}.
\subsubsection*{Fiber bundle over the starting points}
The special role played by the starting point in the metric $G$ induces another fiber bundle structure, where the base space is the manifold $M$, seen as the set of starting points of the curves, and the fibers are composed of the curves which have the same origin. The projection is then
\begin{equation*}
\pi^{(*)} : \mathcal{M} \rightarrow M, \quad c \mapsto c(0).
\end{equation*}
It induces another decomposition of the tangent bundle in vertical and horizontal bundles
\begin{eqnarray*}
V^{(*)}_c\mathcal{M} &=& \ker T\pi^{(*)} = \left\{ \, h \in T_c\mathcal{M} \, | \, h(0)=0 \, \right\}, \\
H^{(*)}_c\mathcal{M} &=& \left( V^{(*)}_c\mathcal{M}\right)^{\perp_G}.
\end{eqnarray*}
%------------PROPOSITION-------------------
\begin{proposition}
We have the usual decomposition $T\mathcal{M}=V^{(*)}\mathcal{M} \,\, \overset{\perp}{\oplus} \,\, H^{(*)}\mathcal{M}$, the horizontal bundle $H^{(*)}_c\mathcal{M}$ consists of parallel vector fields along $c$, and $\pi^{(*)}$ is a Riemannian submersion for $(\mathcal{M},G)$ and $(M, \langle \cdot, \cdot \rangle)$.  
\end{proposition}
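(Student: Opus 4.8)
The plan is to establish the three claims in sequence, with the characterization of the horizontal bundle as the crux from which the rest follows. The central observation is that, writing $A$ for the fiberwise-linear, self-adjoint, positive-definite operator acting on fields along $c$ as the identity on the normal part and as multiplication by $1/4$ on the tangential part, the integral term of $G$ reads $\int_0^1 \langle A\nabla_\ell h, \nabla_\ell k\rangle\, \mathrm d\ell$. A tangent vector $h$ is horizontal precisely when $G_c(h,k)=0$ for every $k$ with $k(0)=0$; for such $k$ the boundary term $\langle h(0),k(0)\rangle$ drops out, so the condition becomes $\int_0^1 \langle A\nabla_\ell h, \nabla_\ell k\rangle\,\mathrm d\ell = 0$.

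First I would show that this forces $h$ to be parallel. Given an arbitrary smooth field $w$ along $c$, the linear covariant ODE $\nabla_\ell k = w$ with initial condition $k(0)=0$ has a unique smooth solution, so $\nabla_\ell k$ ranges over all smooth fields as $k$ ranges over $V^{(*)}_c\mathcal{M}$. Hence $\int_0^1 \langle A\nabla_\ell h, w\rangle\,\mathrm d\ell = 0$ for every such $w$, and since $\mathrm d\ell = \|c'\|\,\mathrm dt$ is a positive density, the fundamental lemma of the calculus of variations gives $A\nabla_\ell h = 0$ pointwise; as $A$ is invertible this means $\nabla_\ell h = 0$, i.e.\ $h$ is parallel. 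Conversely, any parallel field makes the integrand vanish and is thus horizontal, so $H^{(*)}_c\mathcal{M}$ is exactly the space of parallel fields along $c$.

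With this in hand the decomposition is immediate. Orthogonality holds by the very definition of $H^{(*)}_c\mathcal{M}$ as the $G$-orthogonal complement of $V^{(*)}_c\mathcal{M}$. For triviality of the intersection, a field that is both vertical and parallel vanishes at $0$ and is parallel, hence vanishes identically. For the sum, given $h \in T_c\mathcal{M}$ I would let $h^H$ be the parallel transport of $h(0)$ along $c$, which is horizontal, and set $h^V = h - h^H$; then $h^V(0) = 0$, so $h^V \in V^{(*)}_c\mathcal{M}$, giving $h = h^V + h^H$.

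Finally, the submersion property follows by direct evaluation. The differential $T_c\pi^{(*)}$ is the map $h\mapsto h(0)$, which is surjective onto $T_{c(0)}M$ and restricts to a linear isomorphism on $H^{(*)}_c\mathcal{M}$, since a parallel field is determined by its value at $0$. For horizontal $h^H, k^H$ both covariant derivatives vanish, so the integral term of $G$ is zero and $G_c(h^H,k^H)=\langle h^H(0),k^H(0)\rangle = \langle T_c\pi^{(*)}h^H, T_c\pi^{(*)}k^H\rangle$, which is exactly the isometry condition. The only delicate point is the first step: one must confirm that solving the covariant ODE genuinely realizes every smooth field as some $\nabla_\ell k$ with $k(0)=0$, and that the vanishing of the weighted integral against all test fields yields pointwise vanishing; both are standard once the positivity of $A$ and of the arc-length density are noted.
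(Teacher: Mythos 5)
Your proof is correct and follows essentially the same route as the paper: decompose $h$ as the parallel transport of $h(0)$ plus a field vanishing at the origin, and evaluate $G$ on pairs of horizontal vectors to get the submersion property. The one place you go beyond the paper's write-up is the direct argument that horizontal implies parallel (via solvability of the covariant ODE $\nabla_\ell k = w$ with $k(0)=0$ and the fundamental lemma of the calculus of variations); the paper leaves this converse implicit, deducing it from the existence of the decomposition together with the definiteness of $G$, so your more explicit version is sound and slightly more self-contained.
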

%------------------PROOF------------------------
\begin{proof}
Let $h$ be a tangent vector. Consider $h_0$ the parallel vector field along $c$ with initial value $h_0(0)=h(0)$. It is a horizontal vector, since its vanishing covariant derivative along $c$ assures that for any vertical vector $l$ we have $G_c(h_0,l)=0$. The difference $\tilde h=h-h_0$ between those two vectors has initial value $0$ and so it is a vertical vector, which gives a decomposition of $h$ into a horizontal vector and a vertical vector. The definition of $H^{(*)}\mathcal M$ as the orthogonal complement of $V^{(*)}\mathcal M$ guaranties that their sum is direct. Now if $k$ is another tangent vector, then the scalar product between their horizontal parts is
\begin{equation*}
\begin{split}
G_c(h^H,k^H)=\big\langle \, h^H(0),k^H(0) \, &\big\rangle_{c(0)} \\
&=\big\langle \, h(0), k(0) \, \big\rangle_{c(0)} = \big\langle \, T_c\pi^{(*)}(h^H) ,T_c\pi^{(*)}(k^H) \, \big\rangle_{\pi^{(*)}},
\end{split}
\end{equation*}
which proves that $\pi^{(*)}$ is a Riemannian submersion and completes the proof.
\end{proof}

%-------------------------------------------------------------------------------------------------------------------------------------------------------------------------------
\section{Induced distance on the space of curves}
%-------------------------------------------------------------------------------------------------------------------------------------------------------------------------------

Here we give an expression of the geodesic distance induced by the metric $G$. We show that it can be written similarly to the product distance given in \cite{lb} and \cite{zhang}, with an added curvature term. Let us consider two curves $c_0, c_1 \in \mathcal{M}$, and a path of curves $s \mapsto c(s,\cdot)$ linking them in $\mathcal{M}$ \begin{equation*}
c(0,t)=c_0(t), \quad c(1,t)=c_1(t),
\end{equation*} 
for all $t \in [0,1]$. We denote by $q(s,\cdot)=R\left(c(s,\cdot)\right)$ the image of this path of curves by the SRVF $R$. Note that $q$ is a vector field along the surface $c$ in $M$. Let now $\tilde q$ be the "raising" of $q$ in the tangent space $T_{c(0,0)}M$ defined by
\begin{equation}
\label{raisingq}
\tilde q(s,t) = P_{c(\cdot,0)}^{s,0}\circ P_{c(s,\cdot)}^{t,0} \left( q(s,t) \right),
\end{equation}
where we denote by $P_\gamma^{t_1,t_2}:T_{\gamma(t_1)}M \rightarrow T_{\gamma(t_2)}M$ the parallel transport along a curve $\gamma$ from $\gamma(t_1)$ to $\gamma(t_2)$. Notice that $\tilde q$ is a surface in a vector space, as illustrated in Figure \ref{fig:dist}. Lastly, we introduce a vector field $(a,\tau)\mapsto \omega^{s,t}(a,\tau)$ in $M$, which parallel translates $q(s,t)$ along $c(s,\cdot)$ to its origin, then along $c(\cdot,0)$ and back down again, as shown in Figure \ref{fig:dist}. More precisely
\begin{equation}
\label{omega}
\omega^{s,t}(a,\tau)=P_{c(a,\cdot)}^{0,\tau} \circ P_{c(\cdot,0)}^{s,a} \circ P_{c(s,\cdot)}^{t,0} \left( q(s,t) \right),
\end{equation}
for all $b,s$. That way the quantity $\nabla_a\omega^{s,t}(s,t)$ measures the holonomy along the rectangle of infinitesimal width shown in Figure \ref{fig:dist}. 
%--------------- PROPOSITION -----------------
\begin{proposition}
\label{distance1}
With the above notations, the geodesic distance induced by the Riemannian metric $G$ between two curves $c_0$ and $c_1$ on the space $\mathcal{M}=\textup{Imm}([0,1],M)$ of parameterized curves is given by
\begin{equation}
\label{eqdist1a}
d(c_0,c_1)= \inf_{c(0,\cdot)=c_0, c(1,\cdot)=c_1} \int_0^1 \sqrt{\left\| c_s(s,0) \right\|^2 + \int_0^1 \left\| \nabla_sq(s,t) \right\|^2 \, \mathrm dt} \,\,\, \mathrm ds,
\end{equation}
where $q=R(c)$ is the Square Root Velocity representation of the curve $c$ and the norm is the one associated to the Riemannian metric on $M$. It can also be written as a function of the "raising" $\tilde q$ of $q$ in the tangent space $T_{c_0(0)}M$ defined by \eqref{raisingq}, 
\begin{equation}
\label{eqdist1b}
d(c_0,c_1)= \inf_{c(0,\cdot)=c_0, c(1,\cdot)=c_1} \int_0^1 \sqrt{\left\| c_s(s,0) \right\|^2 + \int_0^1 \left\| \tilde q_s(s,t) +  \Omega(s,t) \right\|^2 \mathrm dt} \,\,\, \mathrm ds,
\end{equation}
where $\Omega$ is a curvature term measuring the holonomy along a rectangle of infinitesimal width
\begin{eqnarray*}
\Omega(s,t)&=&P_{c(\cdot,0)}^{s,0} \circ P_{c(s, \cdot)}^{t,0} \left(\nabla_a\omega^{s,t}(s,t) \right)\\
&=& P_{c(\cdot,0)}^{s,0} \left( \int_0^t P_{c(s,\cdot)}^{\tau,0}\left( \mathcal R( c_\tau, c_s) P_{c(s,\cdot)}^{t,\tau}q(s,t) \right) \, \mathrm d\tau\right),
\end{eqnarray*}
if $\mathcal R$ denotes the curvature tensor of the manifold $M$ and $\omega^{s,t}$ is defined by \eqref{omega}.
\end{proposition}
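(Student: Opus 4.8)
The plan is to derive \eqref{eqdist1a} immediately from Proposition 1, and then to pass to \eqref{eqdist1b} through a single pointwise identity relating $\nabla_s q$ to the raised quantity $\tilde q_s+\Omega$. For the first formula I would write the geodesic distance as the infimal length $d(c_0,c_1)=\inf\int_0^1\sqrt{G_{c(s,\cdot)}(c_s,c_s)}\,\mathrm ds$ over all paths joining $c_0$ to $c_1$, and apply Proposition 1 with $h=k=c_s$ together with the definition of $\tilde G$. Using the two facts from the proof of Proposition 1, namely $T_cR(c_s)(t)_H=c_s(s,t)$ and $T_cR(c_s)(t)_V=\nabla_{c_s}R(c)(t)=\nabla_s q(s,t)$, this gives
\[
G_{c(s,\cdot)}(c_s,c_s)=\left\|c_s(s,0)\right\|^2+\int_0^1\left\|\nabla_s q(s,t)\right\|^2\,\mathrm dt,
\]
which is exactly \eqref{eqdist1a}.

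For \eqref{eqdist1b}, I would observe that the raising map $P:=P_{c(\cdot,0)}^{s,0}\circ P_{c(s,\cdot)}^{t,0}$ sends $T_{c(s,t)}M$ isometrically onto the fixed space $T_{c_0(0)}M$, and that by definition $\tilde q(s,t)=P(q(s,t))$ and $\Omega(s,t)=P(\nabla_a\omega^{s,t}(s,t))$. Hence it suffices to prove the pointwise identity
\[
\tilde q_s(s,t)=P\big(\nabla_s q(s,t)\big)-\Omega(s,t)
\]
in $T_{c_0(0)}M$; taking norms and using that $P$ is an isometry then yields $\|\tilde q_s+\Omega\|=\|\nabla_s q\|$, so substituting into \eqref{eqdist1a} gives \eqref{eqdist1b}.

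To establish this identity I would differentiate $\tilde q=P_{c(\cdot,0)}^{s,0}(w)$, where $w(s,t)=P_{c(s,\cdot)}^{t,0}q(s,t)$ is a field along $s\mapsto c(s,0)$. Since $\tilde q$ lies in the fixed space and the outer transport runs along $c(\cdot,0)$, compatibility of parallel transport with $\nabla$ gives $\tilde q_s=P_{c(\cdot,0)}^{s,0}(\nabla_s w)$, with $\nabla_s$ taken along $c(\cdot,0)$. The core computation is $\nabla_s w$, the covariant derivative of a parallel transport along the \emph{varying} curve $c(s,\cdot)$: writing $Y(s,\tau)=P_{c(s,\cdot)}^{t,\tau}q(s,t)$ so that $\nabla_\tau Y=0$, the curvature identity gives $\nabla_\tau\nabla_s Y=\mathcal R(c_\tau,c_s)Y$, and integrating this ODE in $\tau$ from $t$ down to $0$ (via the fundamental theorem of calculus along $c(s,\cdot)$, which conjugates by parallel transport) produces
\[
\nabla_s w(s,t)=P_{c(s,\cdot)}^{t,0}\nabla_s q(s,t)-\int_0^t P_{c(s,\cdot)}^{\tau,0}\,\mathcal R(c_\tau,c_s)\,P_{c(s,\cdot)}^{t,\tau}q(s,t)\,\mathrm d\tau.
\]

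Finally I would identify the integral with $P_{c(s,\cdot)}^{t,0}(\nabla_a\omega^{s,t}(s,t))$: the field $\omega^{s,t}$ satisfies $\nabla_\tau\omega^{s,t}=0$ and $\nabla_a\omega^{s,t}(\cdot,0)=0$, so the same curvature identity $\nabla_\tau\nabla_a\omega^{s,t}=\mathcal R(c_\tau,c_s)\omega^{s,t}$ at $a=s$, integrated in $\tau$ from $0$ to $t$, yields exactly that integral and simultaneously proves the second displayed formula for $\Omega$. Applying $P_{c(\cdot,0)}^{s,0}$ to $\nabla_s w$ then gives the required identity. The main obstacle is the bookkeeping of these two nested parallel transports and their moving base points, together with the correct use of the fundamental theorem of calculus for vector fields (which must be conjugated by parallel transport) and a consistent curvature sign convention, so that the correction produced by differentiating $P_{c(s,\cdot)}^{t,0}$ matches $\Omega$ with the right sign.
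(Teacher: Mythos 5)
Your proposal is correct and follows essentially the same route as the paper: formula \eqref{eqdist1a} from the pullback structure, then the auxiliary field $Y(s,\tau)=P_{c(s,\cdot)}^{t,\tau}q(s,t)$ (the paper's $\nu$), the commutation $\nabla_\tau\nabla_s Y=\mathcal R(c_\tau,c_s)Y$ integrated along $c(s,\cdot)$, and the identification of the resulting integral with $\nabla_a\omega^{s,t}(s,t)$ via the same two vanishing properties of $\omega^{s,t}$. The only difference is cosmetic — you integrate from $t$ down to $0$ and state the result as a pointwise identity for $\tilde q_s$, whereas the paper integrates from $0$ up to $t$ and solves for $\nabla_s\nu(s,t)$ — and both give $\|\nabla_sq\|=\|\tilde q_s+\Omega\|$ by isometry of parallel transport.
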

%-------------------- FIGURE ----------------------
\begin{figure}[h]
\centering
\includegraphics[width=8.2cm]{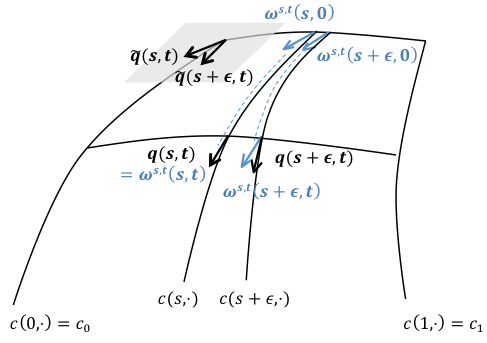}
\caption{Illustration of the distance between two curves $c_0$ and $c_1$ in the space of curves $\mathcal{M}$.}
\label{fig:dist}
\end{figure} 
%------------------- REMARK ---------------------
\begin{remark}
The second expression \eqref{eqdist1b} highlights the difference with respect to the distance given in \cite{lb} and \cite{zhang}. In the first term under the square root we can see the velocity vector of the curve $c(\cdot,0)$ linking the two origins, and in the second the velocity vector of the curve $\tilde q$ linking the TSRVF-images of the curves -- Transported Square Root Velocity Function, as introduced by Su et al. in \cite{su}. If instead we equip the tangent bundle $\text{T}\mathcal{M}$ with the metric
\begin{equation*}
\tilde G'_h(\xi,\xi) = \left\| \xi(0)_H \right\|^2 + \int_0^1 \left\| \, \xi(t)_V - \int_0^t P_c^{\tau,t}\left( \mathcal R(c', \xi_H) P_c^{t,\tau}q(t) \right)\, \mathrm d\tau \, \right\|^2 \mathrm dt,
\end{equation*}
for $h \in T\mathcal{M}$ and $\, \xi, \eta \in T_hT\mathcal{M}$, then the curvature term $\Omega$ vanishes and the geodesic distance on $\mathcal{M}$ becomes
\begin{equation}
\label{eqdist2}
d'(c_0,c_1)=\inf_{c(0,\cdot)=c_0,c(1,\cdot)=c_1} \,\,\,\, \int_0^1 \sqrt{\left\| c_s(s,0) \right\|^2 + \left\| \tilde q_s(s,\cdot) \right\|_{L^2}^2 } \,\,\, \mathrm ds,
\end{equation}
which corresponds exactly to the geodesic distance introduced by Zhang et al. in \cite{zhang} on the space $\mathbb{C}=\cup_{p\in M} L^2([0,1],T_pM)$. The difference between the two distances \eqref{eqdist1a} and \eqref{eqdist2} resides in the curvature term $\Omega$, which measures the holonomy along the rectangle of infinitesimal width shown in Figure \ref{fig:dist}, and arises from the fact that in the first one, we compute the distance in the manifold, whereas in the second, it is computed in the tangent space to one of the origins of the curves. Therefore, the first one takes more directly into account the "relief" of the manifold between the two curves under comparison. For example, if there is a "bump" between two curves in an otherwise relatively flat space, the second distance \eqref{eqdist2} might not see it, whereas the first one \eqref{eqdist1a} should thanks to the curvature term.
\end{remark}
%-------------------REMARK----------------------
\begin{remark}
Let us briefly consider the flat case : if the manifold $M$ is flat, e.g. $M=\mathbb R^n$, then the two distances \eqref{eqdist1a} and \eqref{eqdist2} coincide. If two curves $c_0$ and $c_1$ in $\mathbb R^n$ have the same starting point $p$, the first summand under the square root vanishes and the distance becomes the $L^2$-distance between the two SRV representations $q_0=R(c_0)$ and $q_1=R(c_1)$. If two $\mathbb R^n$-valued curves differ only by a translation, then the distance is simply the distance between their origins. 
\end{remark}
%-------------------REMARK----------------------
\begin{remark}
Note that this distance is only local in general, that is, only works for curves that are "close enough". Indeed, if we consider two curves $c_1$, $c_2$ in $M=\mathbb R^2$ with the same origin, the distance between them is the length of the $L^2$ geodesic between their SRV representations $q_1$ and $q_2$ in $C^\infty([0,1], \mathbb R^2\backslash \{0\})$. If the minimizing geodesic between those two (in $C^\infty([0,1],\mathbb R^2)$) passes through $0$, then there is no minimizing geodesic between $q_1$ and $q_2$ in $C^\infty([0,1], \mathbb R^2\backslash \{0\})$.
\end{remark}
%-------------------- PROOF --------------------
\begin{proof}[Proof of Proposition 3]
Since $G$ is defined by pullback of $\tilde G$ by the SRVF $R$, we know that the lengths of $c$ in $\mathcal{M}$ and of $q=R( c)$ in $\text{T}\mathcal{M}$ are equal and so that
\begin{equation*}
d(c_0,c_1)=\inf_{c(0,\cdot)=c_0,c(1,\cdot)=c_1} \,\,\, \int_0^1 \sqrt{\tilde G \left( q_s(s,\cdot),q_s(s,\cdot)\right)} \,\,\, \mathrm ds,
\end{equation*}
with 
\begin{equation*}
\tilde G \left( q_s(s,\cdot),q_s(s,\cdot)\right) = \left\| c_s(s,0) \right\|^2 + \int_0^1 \left\| \nabla_sq(s,t) \right\|^2 \, \mathrm dt.
\end{equation*}
To obtain the second expression of this distance we need to express $\nabla_sq$ as a function of the derivative $\tilde q_s$. Let us fix $t \in [0,1]$, and consider the vector field $\nu$ along the surface $(s,\tau)\mapsto c(s,\tau)$ that is parallel along all curves $c(s,\cdot)$ and takes value $\nu(s,t)=q(s,t) \,$ in $\tau=t \,$ for any $s\in[0,1]$, that is
\begin{equation*}
\nu(s,\tau)=P_{c(s,\cdot)}^{t,\tau}\left(q(s,t)\right),
\end{equation*}
for all $s\in [0,1]$ and $\tau\in [0,1]$. With this definition we have $\nabla_s\nu(s,t)=\nabla_sq(s,t)$. Since $\nu(\cdot,0) : s \mapsto P_{c(s,\cdot)}^{t,0}\left(q(s,t)\right)$ is a vector field along $c(\cdot, 0)$, we can write
\begin{equation*}
\nabla_s\nu(s,0)=\nabla_s\left( P_{c(s,\cdot)}^{t,0}q(s,t) \right)=P_{c(\cdot, 0)}^{0,s} \bigg( \frac{\partial}{\partial s} P_{c(\cdot, 0)}^{s,0} \circ P_{c(s,\cdot)}^{t,0} \left(q(s,t)\right) \bigg) = P_{c(\cdot, 0)}^{0,s}\tilde q_s (s,t).
\end{equation*}
Noticing that we additionally have $\nabla_\tau\nu(s,\tau)=0$ for all $s,\tau \in[0,1]$, and using $\nabla_\tau\nabla_s\nu = \nabla_s\nabla_\tau\nu + \mathcal R(c_\tau, c_s)\nu$, the covariant derivative in $\tau=t$ can be written
\begin{eqnarray}
\label{nablav}
\nabla_s\nu(s,t)&=&P_{c(s,\cdot)}^{0,t} \left( \nabla_s\nu(s,0) \right) + \int_0^t P_{c(s,\cdot)}^{\tau,t}\left( \nabla_\tau\nabla_s \nu(s,\tau) \right) d\tau \notag \\
&=& P_{c(s,\cdot)}^{0,t} \circ P_{c(\cdot, 0)}^{0,s} \left( \tilde q_s(s,t) \right) + \int_0^t P_{c(s,\cdot)}^{\tau,t}\left( \mathcal R(c_\tau,c_s) P_{c(s,\cdot)}^{t,\tau}q(s,t) \right) \, \mathrm d\tau.
\end{eqnarray}
Now let us fix $s\in [0,1]$ as well. Notice that the vector field $\omega^{s,t}$ defined above as $\omega^{s,t}(a,\tau)=P_{c(a,\cdot)}^{0,\tau} \circ P_{c(\cdot,0)}^{s,a} \circ P_{c(s,\cdot)}^{t,0} \left( q(s,t) \right)$ verifies
\begin{eqnarray}
\nabla_\tau\omega^{s,t}(s,\tau)&=& 0 \quad \forall \tau \in [0,1],\label{nablaomega1}\\ 
\nabla_a\omega^{s,t}(a,0)&=& 0 \quad \forall a \in [0,1].\label{nablaomega2}
\end{eqnarray}
Note that unlike $\nu$, we do \textit{not} have $\nabla_a\omega^{s,t}(s,t)=\nabla_sq(s,t)$ because $\omega^{s,t}(a,t)=q(a,t)$ is only true for $a=s$. Using Equations \eqref{nablaomega1} and \eqref{nablaomega2} we get
\begin{align*}
\nabla_a\omega^{s,t}(s,t) &= P_{c(s,\cdot)}^{0,t}\left(\nabla_a\omega^{s,t}(s,0)\right) + \int_0^t P_{c(s,\cdot)}^{\tau,t}\left(\nabla_\tau\nabla_a \omega^{s,t}(s,\tau)\right) \mathrm d\tau \\
&= \int_0^t P_{c(s,\cdot)}^{\tau,t}\left( \nabla_a\nabla_\tau\omega^{s,t}(s,\tau) + \mathcal R(c_\tau, c_s)\omega^{s,t}(s,\tau) \right) \, \mathrm d\tau, \\
&= \int_0^t P_{c(s,\cdot)}^{\tau,t}\left( \mathcal R(c_\tau, c_s) P_{c(s,\cdot)}^{t,\tau}q(s,t) \right) \, \mathrm d\tau,
\end{align*}
which is the same integral as the one in \eqref{nablav}. Finally, since $\| \nabla_sq(s,t)\| = \|\nabla_s\nu(s,t)\| = \| P_{c(\cdot,0)}^{s,0} \circ P_{c(s, \cdot)}^{t,0} \left(\nabla_s\nu(s,t)\right)\|$ we obtain
\begin{align*}
\| \nabla_sq(s,t)\| &= \| \tilde q_s(s,t) + P_{c(\cdot,0)}^{s,0} \circ P_{c(s, \cdot)}^{t,0} \left(\nabla_a\omega^{s,t}(s,t)\right)\| \\
&= \left\| \tilde q_s(s,t) + P_{c(\cdot,0)}^{s,0} \int_0^t P_{c(s,\cdot)}^{\tau,0}\left( \mathcal R(c_\tau, c_s) P_{c(s,\cdot)}^{t,\tau}q(s,t) \right) \, \mathrm d\tau \right\|
\end{align*}
which gives Equation \eqref{eqdist1b} and completes the proof.
\end{proof}

%-----------------------------------------------------------------------------------------------------------------------------------------------------------------------------
\section{Computing geodesics}
%-----------------------------------------------------------------------------------------------------------------------------------------------------------------------------

\subsection{Geodesic equations on $\mathcal M$}%-------------------------------------------------------------------------------------------------------------

To be able to compute the distance given by \eqref{eqdist1a} between two curves, we first need to compute the optimal deformation $s\mapsto c(s,\cdot)$ from one to the other. That is, we need to characterize the geodesics of $\mathcal M$ for our metric. In order to do so, taking inspiration from \cite{zhang}, we use the variational principle. In what follows, we use the lighter notation $u(t_1)^{t_1,t_2}=P_c^{t_1,t_2}(u(t_1))$ to denote the parallel transport of a vector $u(t_1)\in T_{c(t_1)}M$ along a curve $c$ from $c(t_1)$ to $c(t_2)$, when there is no ambiguity on the choice of $c$. We also denote by $w^T=\langle w, v\rangle v$ the tangential component of any vector field $w$ along a curve $c$, that is its projection on the unit speed vector field $v=c'/\|c'\|$.
%---------- PROPOSITION ------------
\begin{proposition}
Let $[0,1]\ni s \mapsto c(s,\cdot)\in\mathcal M$ be a path of curves. It is a geodesic of $\mathcal M$ if and only if it verifies the following equations
\begin{subequations}
\begin{align}
\nabla_sc_s(s,0) + r(s,0) \,\, =& \,\,\, 0 , \quad \forall s \label{geodeq1} \\
\nabla_s\nabla_s q(s,t) + \left\|q(s,t) \right\| \left( r(s,t) + r(s,t)^T \right) \, =& \,\,\, 0 , \quad \forall t,s \label{geodeq2}
\end{align}
\end{subequations}
where $q = c_t/\sqrt{\|c_t\|}$ is the SRV representation of $c$, the vector field $r$ is given by
\begin{equation*}
r(s,t) = \int_t^1 \mathcal R(q,\nabla_sq)c_s(s,\tau)^{\tau,t} \mathrm d\tau,
\end{equation*} 
and $r^{T} = \Braket{r,v} v \,$ with $v=c_t / \| c_t \|$, is the tangential component of $r$.
\end{proposition}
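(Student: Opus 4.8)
The plan is to obtain \eqref{geodeq1}--\eqref{geodeq2} as the Euler--Lagrange conditions for the energy
\[ E(c)=\tfrac12\int_0^1 G_{c(s,\cdot)}(c_s,c_s)\,\mathrm ds=\tfrac12\int_0^1\Big(\|c_s(s,0)\|^2+\int_0^1\|\nabla_sq(s,t)\|^2\,\mathrm dt\Big)\mathrm ds, \]
whose integrand is exactly the quantity $\tilde G(q_s,q_s)$ computed in the proof of Proposition 3. I would fix a candidate geodesic $c$, take a proper variation $c_a(s,t)$ with variation field $w=\partial_a c\big|_{a=0}$ vanishing at $s=0,1$ for all $t$ (so the endpoints $c_0,c_1$ are preserved) while $w(s,0)$ stays free, and rewrite $\tfrac{\mathrm d}{\mathrm d a}E(c_a)\big|_{a=0}$ as a pairing against the independent data carried by $w$, namely $w(s,0)$ and $\nabla_t w$.

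The first summand is the energy of the path of origins $s\mapsto c(s,0)$ in $M$; integrating by parts in $s$ (using $\nabla_ac_s=\nabla_sw$ and $w(0,\cdot)=w(1,\cdot)=0$) turns its variation into $-\int_0^1\langle w(s,0),\nabla_sc_s(s,0)\rangle\,\mathrm ds$. For the second summand I write its variation as $\int_0^1\int_0^1\langle \nabla_a\nabla_sq,\nabla_sq\rangle\,\mathrm dt\,\mathrm ds$ and split it with $\nabla_a\nabla_sq=\nabla_s\nabla_aq+\mathcal R(w,c_s)q$. The $\nabla_s\nabla_aq$ part is integrated by parts in $s$; the boundary term vanishes since $w\equiv0$ at $s=0,1$ forces $\nabla_aq=0$ there, leaving $-\int\!\!\int\langle \nabla_aq,\nabla_s\nabla_sq\rangle$. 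Substituting the Proposition 1 formula with $h=w$ gives $\nabla_aq=\|c_t\|^{-1/2}\big((\nabla_tw)^N+\tfrac12(\nabla_tw)^T\big)$, so this part becomes $-\int\!\!\int\langle \nabla_tw,\,Z\rangle$ with $Z:=\|c_t\|^{-1/2}\big((\nabla_s\nabla_sq)^N+\tfrac12(\nabla_s\nabla_sq)^T\big)$.

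The crux is the curvature part $\int\!\!\int\langle \mathcal R(w,c_s)q,\nabla_sq\rangle$. Using the pair symmetry and skew-adjointness of $\mathcal R$ I would rewrite its integrand as $-\langle \mathcal R(q,\nabla_sq)c_s,w\rangle$, and then observe that the vector field $r$ is built precisely so that $\nabla_tr=-\mathcal R(q,\nabla_sq)c_s$ with terminal condition $r(s,1)=0$ (differentiate the defining integral, noting the parallel-transported integrand is covariantly constant in its endpoint). Hence the integrand equals $\langle w,\nabla_tr\rangle$, and one integration by parts in $t$ yields $-\int_0^1\langle w(s,0),r(s,0)\rangle\,\mathrm ds-\int\!\!\int\langle \nabla_tw,r\rangle$, the $t=1$ boundary term dropping out by $r(s,1)=0$. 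This single step is what makes $r$ appear in both equations at once: its boundary value $r(s,0)$ feeds the origin equation, and its bulk value feeds the interior equation.

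Collecting everything, $\tfrac{\mathrm d}{\mathrm d a}E\big|_{a=0}=-\int_0^1\langle w(s,0),\nabla_sc_s(s,0)+r(s,0)\rangle\,\mathrm ds-\int\!\!\int\langle \nabla_tw,\,Z+r\rangle\,\mathrm dt\,\mathrm ds$. Since $w(s,0)$ (tested by parallel variations, for which $\nabla_tw\equiv0$) and $\nabla_tw$ (tested by variations with $w(s,0)=0$) are independent, stationarity is equivalent to $\nabla_sc_s(s,0)+r(s,0)=0$, which is \eqref{geodeq1}, together with $Z+r=0$. It remains to match the latter with \eqref{geodeq2}: projecting $Z+r=0$ onto normal and tangential parts gives $(\nabla_s\nabla_sq)^N=-\|c_t\|^{1/2}r^N$ and $(\nabla_s\nabla_sq)^T=-2\|c_t\|^{1/2}r^T$, whose sum is $\nabla_s\nabla_sq=-\|c_t\|^{1/2}(r^N+2r^T)=-\|c_t\|^{1/2}(r+r^T)$; since $\|q\|=\|c_t\|^{1/2}$ this is exactly \eqref{geodeq2}. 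I expect the main difficulty to be the careful bookkeeping of the commutations and boundary terms, in particular the identity $\nabla_tr=-\mathcal R(q,\nabla_sq)c_s$ and the factor-of-$2$ asymmetry between tangential and normal components that recombines $r^N+2r^T$ into $r+r^T$.
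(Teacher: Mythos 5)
Your proof is correct and rests on the same variational skeleton as the paper's: criticality of the energy, the commutation $\nabla_a\nabla_s q=\nabla_s\nabla_a q+\mathcal R(c_a,c_s)q$, integration by parts in $s$ with vanishing boundary terms, and the observation that the variation of the origin and the variation of the derivative data along the curve can be prescribed independently. The one genuine organizational difference is how $r$ enters. The paper reconstructs $\hat c_a(s,t)$ from $\hat c_a(s,0)$ and $\nabla_a\hat q$ by integrating $\nabla_t\hat c_a=\|\hat q\|(\nabla_a\hat q+\nabla_a\hat q^T)$, substitutes into the curvature term, and applies Fubini to the resulting double integral, pairing everything against the independent data $\bigl(\hat c_a(s,0),\nabla_a\hat q\bigr)$, so that \eqref{geodeq2} appears directly. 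You instead keep the variation field $w$ intact, observe that $-\mathcal R(q,\nabla_s q)c_s=\nabla_t r$ with $r(s,1)=0$, and integrate by parts in $t$, pairing against the independent data $\bigl(w(s,0),\nabla_t w\bigr)$; this is the dual bookkeeping (integration by parts versus Fubini applied to the same double integral), and it forces the extra closing step of converting $Z+r=0$ into \eqref{geodeq2} via the componentwise identity $r^N+2r^T=r+r^T$, which you carry out correctly. Both routes are equally rigorous; yours makes the role of $r$ as a covariant primitive of the curvature term more transparent, while the paper's lands on the stated form of \eqref{geodeq2} without the final recombination.
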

%---------------- PROOF ----------------
\begin{proof}
The path $c$ is a geodesic if and only if it is a critical point of the energy functional $E : \mathcal C^\infty([0,1],\mathcal M) \rightarrow \mathbb R_+$,
\begin{equation*}
E(c) = \frac{1}{2} \int_0^1 G\left(\frac{\partial c}{\partial s},\frac{\partial c}{\partial s}\right) \mathrm ds.
\end{equation*}
Let $a\mapsto \hat c(a,\cdot,\cdot)$, $a\in(-\epsilon, \epsilon)$, be a proper variation of the path $s\mapsto c(s,\cdot)$, meaning that it coincides with $c$ in $a=0$, and it preserves its end points
\begin{eqnarray*}
\hat c(0,s,t)&=&c(s,t) \quad \forall s,t, \\
\hat c_a(a,0,t)&=&0 \quad\quad \quad \forall a,t,\\
\hat c_a(a,1,t)&=&0 \quad \quad \quad \forall a,t.
\end{eqnarray*}
Then $c$ is a geodesic of $\mathcal M$ if and only if $\left. \frac{d}{da}\right|_{a=0}E(\hat c(a,\cdot,\cdot))=0$ for any proper variation $\hat c$. If we denote by $E(a) = E(\hat c(a,\cdot,\cdot))$, for $a\in(-\epsilon,\epsilon)$, the energy of a proper variation $\hat c$, then we have
\begin{equation*}
E(a)= \frac{1}{2} \int \left( \Braket{\,  \hat c_s(a,s,0), \hat c_s(a,s,0) \,} \mathrm ds \, + \, \int \Braket{\, \nabla_s \hat q(a,s,t),\nabla_s \hat q(a,s,t) \,} \mathrm dt \right)\, \mathrm ds,
\end{equation*}
where $\hat q=\hat c_t/\sqrt{\|\hat c_t\|}$ is the SRV representation of $\hat c$. Its derivative is given by
\begin{equation*}
E'(a) = \int \Braket{ \, \nabla_a  \hat c_s(a,s,0), \hat c_s(a,s,0) \,} \, \mathrm ds 
+ \int\int \Braket{\, \nabla_a \nabla_s \hat q(a,s,t),\nabla_s \hat q(a,s,t) \,} \, \mathrm dt \, \mathrm ds. 
\end{equation*}
Considering that the variation preserves the end points, integration by parts gives
\begin{align*}
\int \Braket{\, \nabla_a \hat c_s, \hat c_s \,} \mathrm ds &= - \int \Braket{\nabla_s \hat c_s, \hat c_a} \mathrm ds \\
\int \Braket{\, \nabla_s\nabla_a \hat q, \nabla_s \hat q \,} \mathrm ds &= - \int \Braket{\nabla_s \nabla_s \hat q, \nabla_a \hat q} \mathrm ds,
\end{align*}
and so we obtain
\begin{align*}
E'(a)&= - \, \int \left. \Braket{ \, \nabla_s  \hat c_s,  \hat c_a \,} \right|_{t=0} \mathrm ds
       \, + \, \int \int \Braket{ \, \mathcal R (  c_a,   c_s)   q \, +\, \nabla_s\nabla_a  q , \nabla_s  q\,} \mathrm dt \, \mathrm ds \\
       &= - \, \int \left. \Braket{ \, \nabla_s \hat c_s , \hat c_a \,} \right|_{t=0} \mathrm ds
       \,-\, \int \int \Braket{\, \mathcal R( \hat q, \nabla_s \hat q) \hat c_s , \hat c_a \,} \, + \, \Braket{\, \nabla_s \nabla_s \hat q , \nabla_a \hat q\,} \mathrm dt \, \mathrm ds.
\end{align*}
This quantity has to vanish in $a=0$ for all proper variations $\hat c$
\begin{equation*}
\begin{split}
\int \big\langle \left. \nabla_s  c_s \right|_{t=0}, &\left. \hat c_a \right|_{a=0, t=0} \,\big\rangle \, \mathrm ds \\
&+ \int \int \Braket{\, \mathcal R(q,\nabla_sq)c_s,\left.  \hat c_a \right|_{a=0} \,} \, + \, \Braket{\, \nabla_s\nabla_s q, \left. \nabla_a \hat q \right|_{a=0} \,} \, \mathrm dt \, \mathrm ds \, = \, 0. 
\end{split}
\end{equation*} 
We cannot yield any conclusions at this point, because $\hat c_a(0,s,t)$ and $\nabla_a \hat q(0,s,t)$ cannot be chosen independently, since $\hat q$ is not any vector field along $\hat c$ but its image via the Square Root Velocity Function. Computing the covariant derivative of $\hat q = \hat c_t / \|\hat c_t\|^{1/2}$ according to $a$ gives $\nabla_a\hat q = \|\hat c_t\|^{-1/2}(\nabla_a\hat c_t - \tfrac{1}{2}{\nabla_a\hat c_t}^T)$, and projecting both sides on $v=c_t/\|c_t\|$ results in ${\nabla_a\hat q}^T=\tfrac{1}{2} \|\hat q\|^{-1} {\nabla_a\hat c_t}^T$. We deduce
\begin{equation*}
\nabla_a\hat c_t = \left\| \hat q \right\| \left( \nabla_a \hat q + \nabla_a \hat q^T \right),
\end{equation*}
and since $\nabla_t\hat c_a = \nabla_a\hat c_t$, we can express the variation $\hat c_a$ as follows
\begin{equation*}
\hat c_a(0,s,t) = \hat c_a(0,s,0)^{0,t} + \int_0^t \left\| \hat q(0,s,\tau) \right\| \left( \nabla_a \hat q(0,s,\tau) + \nabla_a\hat q^T(0,s,\tau) \right)^{\tau,t} \mathrm d\tau.
\end{equation*}
Inserting this expression in the derivative of the energy we obtain the following, where we omit to write that the variations $\hat c$ and $\hat q$ are always taken in $a=0$ for the sake of readability,
\begin{equation*}
\begin{split}
& \int_0^1 \big\langle \, \nabla_s c_s(s,0) , \hat c_a(s,0) \,\big\rangle \, \mathrm ds + \int_0^1 \int_0^1 \big\langle\, \mathcal R(q, \nabla_s q) c_s(s,t), \hat c_a(s,0)^{0,t} \,\big\rangle \, \mathrm dt \, \mathrm ds \\
& + \int_0^1 \int_0^1 \bigg\langle \, \mathcal R(q, \nabla_s q) c_s(s,t) , \int_0^t \left\| \hat q(s,\tau) \right\| \left( \nabla_a \hat q(s,\tau) + \nabla_a\hat q^T(s,\tau) \right)^{\tau,t} \mathrm d\tau \, \bigg\rangle \, \mathrm dt \, \mathrm ds \\
& + \int_0^1 \int_0^1 \big\langle\, \nabla_s\nabla_s q(s,t) , \nabla_a \hat q(s,t) \,\big\rangle\, \mathrm dt \, \mathrm ds \\
=& \int_0^1 \bigg\langle\, \nabla_s c_s(s,0) + \int_0^1 \mathcal R(q, \nabla_s q)c_s(s,\tau)^{\tau,0} \mathrm d\tau \,, \hat c_a(s,0) \,\bigg\rangle\, \mathrm ds \\
& + \int_0^1 \int_0^1 \int_t^1 \big\langle \, \mathcal R(q, \nabla_s q) c_s(s,\tau)^{\tau,t} , \left\| \hat q(s,t) \right\| ( \nabla_a \hat q(s,t) + \nabla_a\hat q^T(s,t) ) \, \big\rangle \, \mathrm d\tau \, \mathrm dt \, \mathrm ds \\
& + \int_0^1 \int_0^1 \big\langle\, \nabla_s\nabla_s q(s,t) , \nabla_a \hat q(s,t) \,\big\rangle\, \mathrm dt \, \mathrm ds \\
=& \int_0^1 \bigg\langle \nabla_s c_s(s,0) + r(s,0) \,, \hat c_a(s,0) \bigg\rangle\, \mathrm ds \\
& + \int_0^1 \int_0^1 \big\langle \, \nabla_s\nabla_sq(s,t) + \|q(s,t)\| ( r(s,t) + r(s,t)^T ) , \nabla_a \hat q(s,t)\,\, \big\rangle \, \mathrm dt \, \mathrm ds \\
=& 0,
\end{split}
\end{equation*}
with the previously given definition of $r$. Since the variations $\hat c_a(0,s,0)$ and $\nabla_a \hat q(0,s,t)$ can be chosen independently and take any value for all $s$ and all $t$, we obtain the desired equations.
\end{proof}

\subsection{Exponential map}%------------------------------------------------------------------------------------------------------------------------------------

Now that we have the geodesic equations, we are able to describe an algorithm which allows us to compute the geodesic $s\mapsto c(s,\cdot)$ starting from a point $c\in \mathcal M$ at speed $u\in T_c\mathcal M$. This amounts to finding the optimal deformation of the curve $c$ in the direction of the vector field $u$ according to our metric. We initialize this path $s\mapsto c(s,\cdot)$ by setting $c(0,\cdot)=c$ and $c_s(0,\cdot)=u$, and we propagate it using iterations of fixed step $\epsilon >0$. The aim is, given $c(s,\cdot)$ and $c_s(s,\cdot)$, to deduce $c(s+\epsilon,\cdot)$ and $c_s(s+\epsilon,\cdot)$. The first is obtained by following the exponential map on the manifold $M$ 
\begin{equation*}
c(s+\epsilon,t)=\exp^M_{c(s,t)} \left(\epsilon c_s(s,t)\right),
\end{equation*}
for all $t\in[0,1]$ and the second requires the computation of the variation $\nabla_s c_s(s,\cdot)$
\begin{equation*}
c_s(s+\epsilon,t)=\left[ c_s(s,t) + \epsilon \nabla_s c_s(s,t) \right]^{s,s+\epsilon},
\end{equation*}
for all $t\in[0,1]$ where once again, we use the notation $w(s)^{s,s+\epsilon}=P_c^{s,s+\epsilon} \left(w(s)\right)$ for the parallel transport of a vector field $s\mapsto w(s)$ along a curve $s\mapsto c(s)$ in $M$. If we assume that at time $s$ we have $c(s,\cdot)$ and $c_s(s,\cdot)$ at our disposal, then we can estimate $c_t(s,\cdot)$ and $\nabla_tc_s(s,\cdot)$, and deduce $q(s,\cdot)=c_t(s,\cdot)/\sqrt{\|c_t\|}$ as well as 
\begin{equation}
\label{nablasq}
\nabla_sq(s,\cdot)=\frac{\nabla_s c_t}{\sqrt{|c_t|}}(s,\cdot) - \frac{1}{2} \frac{\Braket{\, \nabla_s c_t, c_t \,}}{|c_t|^{5/2}}c_t(s,\cdot),
\end{equation}
using the fact that $\nabla_s c_t=\nabla_tc_s$. The variation $\nabla_sc_s(s,\cdot)$ can then be computed in the following way
\begin{equation}
\label{nscs}
\nabla_sc_s(s,t)=\nabla_sc_s(s,0)^{0,t} + \int_0^t \left[ \nabla_s\nabla_sc_t(s,\tau) + \mathcal R(c_t,c_s)c_s(s,\tau) \right]^{\tau,t} \mathrm d\tau
\end{equation}
for all $t\in[0,1]$, where $\nabla_sc_s(s,0)$ is given by equation \eqref{geodeq1}, the second order variation $\nabla_s\nabla_sc_t(s,\cdot)$ is given by
\begin{equation}
\begin{split}
\label{nablasnablasct}
\nabla_s \nabla_s c_t = |c_t|^{1/2} \nabla_s\nabla_sq \,+\, &\frac{ \Braket{\nabla_tc_s,c_t}}{|c_t|^2} \nabla_tc_s  \\
&+ \left( \frac{\Braket{\nabla_s\nabla_sq,c_t}}{|c_t|^{3/2}} \,-\, \frac{3}{2} \frac{\Braket{\nabla_tc_s,c_t}^2}{|c_t|^4} \,+\, \frac{|\nabla_tc_s|^2}{|c_t|^2} \right) c_t,
\end{split}
\end{equation}
and $\nabla_s\nabla_sq$ can be computed via equation \eqref{geodeq2}.
%---------------- ALGORITHM ------------------
\begin{algorithm}[Exponential Map] 
\label{alg:expmap}
\leavevmode\par \noindent
Input : $c_0$, $u$. \\
Initialization : Set $c(0,t)=c_0(t)$ and $c_s(0,t)=u(t)$ for all $t\in[0,1]$. \\
Heredity : For $j =0, \hdots, m-1$, set $s=j\epsilon$ with $\epsilon = 1/m$ and
\begin{enumerate} 
\item compute for all $t$
\begin{align*}
c_t(s,t) &= \lim_{\delta \rightarrow 0} \frac{1}{\delta} \log^M _{c(s,t)} c(s,t+\delta), \\
\nabla_tc_s(s,t) &= \lim_{\delta \rightarrow 0} \frac{1}{\delta} \left( c_s(s,t+\delta)^{t+\delta,t} - c_s(s,t) \right),
\end{align*}
where $\,\, \log^M \,\,$ denotes the inverse of the exponential map on $M$, and compute $q(s,t)=\frac{1}{\sqrt{\|c_t\|}}c_t(s,t)$ and $\nabla_sq(s,t)$ using equation \eqref{nablasq}.

\medskip
\item Compute $r(s,t) = \int_t^1 \mathcal R(q,\nabla_sq)c_s(s,\tau)^{\tau,t} \mathrm d\tau,
$ and
\begin{equation*}
\nabla_s\nabla_s q(s,t) = - \, \|q(s,t)\| \left( r(s,t) + r(s,t)^T \right),
\end{equation*}
and deduce $\nabla_s\nabla_sc_t(s,t)$ using equation \eqref{nablasnablasct} for all $t\in[0,1]$.

\medskip
\item Initialize $\nabla_sc_s(s,0) = - r(s,0)$ and compute $\nabla_sc_s(s,\cdot)$ using equation \eqref{nscs}. 

\medskip
\item Finally, for all $t\in[0,1]$, set
\begin{align*}
c(s+\epsilon,t) &= \exp^M_{c(s,t)} \left( \epsilon c_s(s,t) \right), \\
c_s(s+\epsilon,t) &= \left[ \,c_s(s,t) + \epsilon \nabla_sc_s(s,t) \,\right]^{s,s+\epsilon},
\end{align*}
where $\exp^M$ is the exponential map on the manifold $M$.
\end{enumerate}
Output : $c = \exp^{\mathcal M}_{c_0} u$.
\end{algorithm}
%---------------------------------------------------
The last step needed to compute the optimal deformation between two curves $c_0$ and $c_1$ is to find the appropriate initial speed $u$, that is the one that will connect $c_0$ to $c_1$. Since we do not have an explicit expression for this appropriate initial speed, we compute it iteratively using geodesic shooting.
\subsection{Geodesic shooting and Jacobi fields}%--------------------------------------------------------------------------------------------------------------

The aim of geodesic shooting is to compute the geodesic linking two points $p_0$ and $p_1$ of a manifold $\mathcal N$, knowing the exponential map $\exp^{\mathcal N}$. More precisely, the goal is to iteratively find the initial speed $u^{p_0,p_1}$ such that
\begin{equation*}
\exp_{p_0}^{\mathcal N}(u^{p_0,p_1}) = p_1. 
\end{equation*}
An initial speed vector $u \in T_{p_0}\mathcal N$ is chosen, and is iteratively updated after evaluating the gap between the point $p=\exp^{\mathcal N}_{p_0}u$ obtained by taking the exponential map at point $p_0$ in $u$ -- that is, by "shooting" from $p_0$ in the direction $u$ -- and the target point $p_1$. Assuming that the current point $p$ is "not too far" from the target point $p_1$, and that there exists a geodesic linking $p_0$ to $p_1$, we can consider that the gap between $p$ and $p_1$ is the extremity of a Jacobi field $J : [0,1] \rightarrow \mathcal N$ in the sense that it measures the variation between the geodesics $s\mapsto \exp_{p_0}^{\mathcal N}(s u)$ and $s\mapsto \exp_{p_0}^{\mathcal N}(s u^{p_0,p_1})$. Since both geodesics start at $p_0$, this Jacobi field has value $J(0)=0$ in $0$. Then, the current speed vector can be corrected by 
\begin{equation*}
u \leftarrow u + \dot J(0),
\end{equation*}
as shown in Figure \ref{fig:geodshoot}. Let us briefly explain why. If $c(a,s)$, $a\in(-\epsilon, \epsilon), s\in[0,1]$, is a family of geodesics starting from the same point $p_0$ at different speeds $u(a) \in T_{p_0}\mathcal N$, i.e. $c(a,s) = \exp^{\mathcal N}_{p_0} (s u(a))$, and $J(s) = c_a(0,s)$, $s\in[0,1]$ measures the way that these geodesics spread out, then we have
\begin{equation*}
\dot J(0) = \left.\frac{\partial}{\partial s}\right|_{s=0} \left.\frac{\partial}{\partial a}\right|_{a=0}\exp^{\mathcal N}_{p_0} (su(a))
= \left.\frac{\partial}{\partial a}\right|_{a=0} \left.\frac{\partial}{\partial s}\right|_{s=0}\exp^{\mathcal N}_{p_0} (su(a))
=\dot u(0).
\end{equation*}
In the context of geodesic shooting between two curves $c_0$ and $c_1$ in $\mathcal M$, the speed vector $u$ can be initialized using the $L^2$ logarithm map, the inverse of the exponential map for the $L^2$-metric (these maps are simply obtained by post-composition of mappings with the finite-dimensional maps $\exp^M$ and $\log^M$). That is, we set 
\begin{equation*}
u=\log_{c_0}^{L^2}(c_1).
\end{equation*}
The $L^2$ logarithm map also allows us to approximate the gap between the current point and the target point. This amounts to minimizing the functional $F(u)=\text{dist}_{L^2}(\exp^{\mathcal M}_{c_0}(u), c_1)$. We summarize as follows.
%----------------- FIGURE -------------------
\begin{figure}
\centering
\includegraphics[width=7cm]{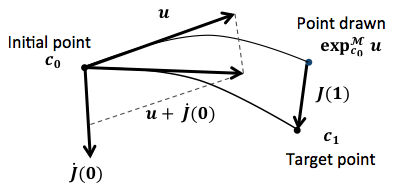}
\caption{Geodesic shooting in the space of curves $\mathcal M$}
\label{fig:geodshoot}
\end{figure}
%-------------- ALGORITHM ----------------
\begin{algorithm}[Geodesic shooting]
\label{alg:geodshoot}
\leavevmode\par \noindent
Input : $c_0, c_1\in \mathcal M$. \\
Initialization : Set $u=\log^{L^2}_{c_0}(c_1)$.
Fix a threshold $\delta >0$.
\begin{enumerate}
\item Compute $c = \exp^{\mathcal M}_{c_0} (u)$ with Algorithm \ref{alg:expmap}.
\item Estimate the gap $j = \log^{L^2}_{c}(c_1)$.
\item If $\|j\|_{L^2} > \delta$, set $J(1) = j$ and $u \leftarrow u + \dot J(0)$ where $\dot J(0) = \phi^{-1}\left( J(1) \right)$ is computed using Algorithm \ref{alg:jacobi}, and go back to the first step. \\
Else, stop.
\end{enumerate}
Output : $c$ approximation of the geodesic linking $c_0$ and $c_1$.
\end{algorithm}
The function $\phi$ associates the last value $J(1)$ of a Jacobi field with initial value $J(0)=0$ to the initial speed $\dot J(0)$, and can be deduced from Algorithm \ref{alg:jacobi}, which describes the function associating $J(1)$ to the initial conditions $J(0)$ and $\dot J(0)$. To find the inverse of this function, we consider the image of a basis of the tangent vector space in which $\dot J(0)$ lives.
%----------------- FIGURE -------------------
\begin{figure}
\subfloat{\includegraphics[width=0.32\textwidth]{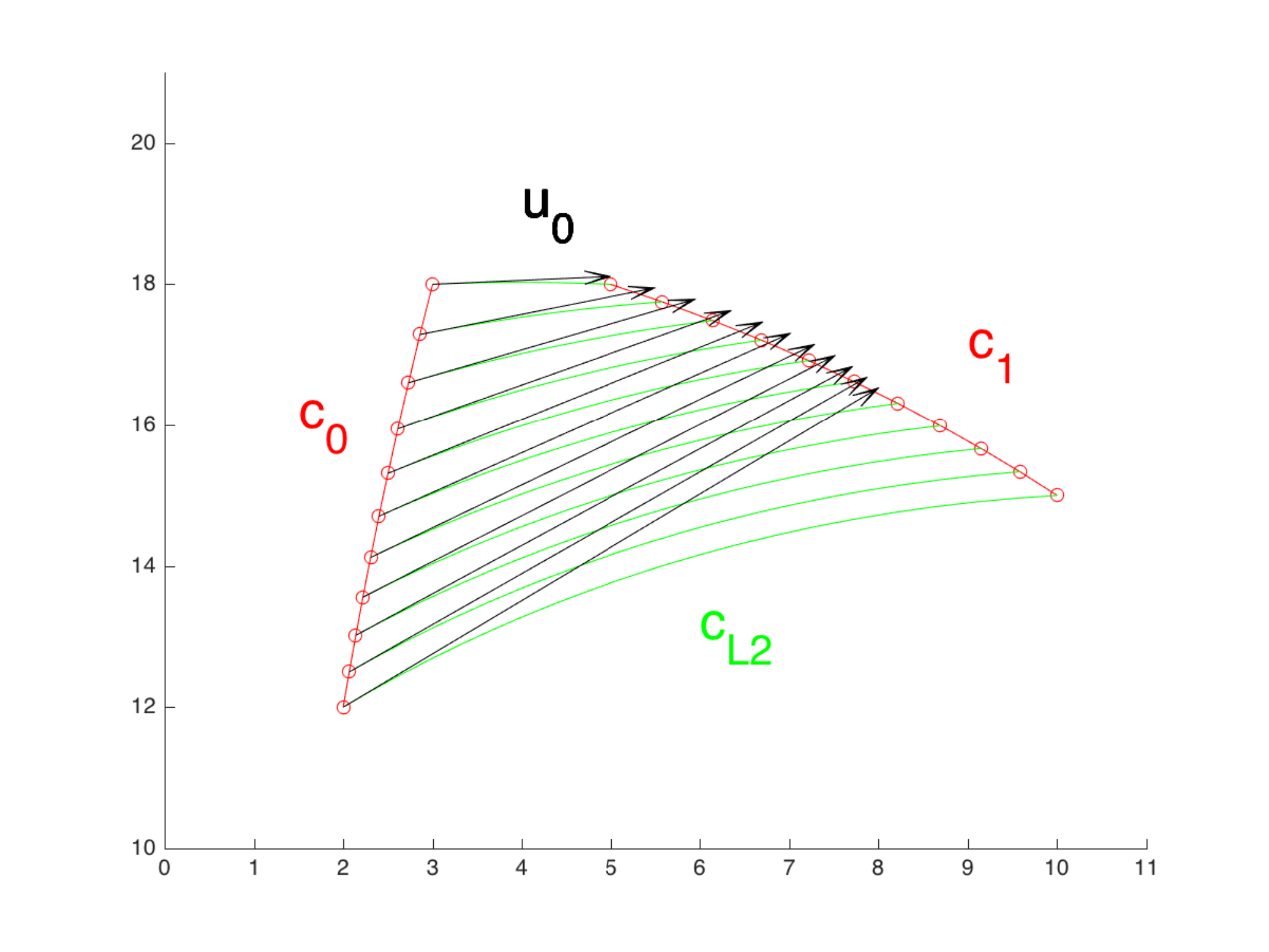}}
\subfloat{\includegraphics[width=0.32\textwidth]{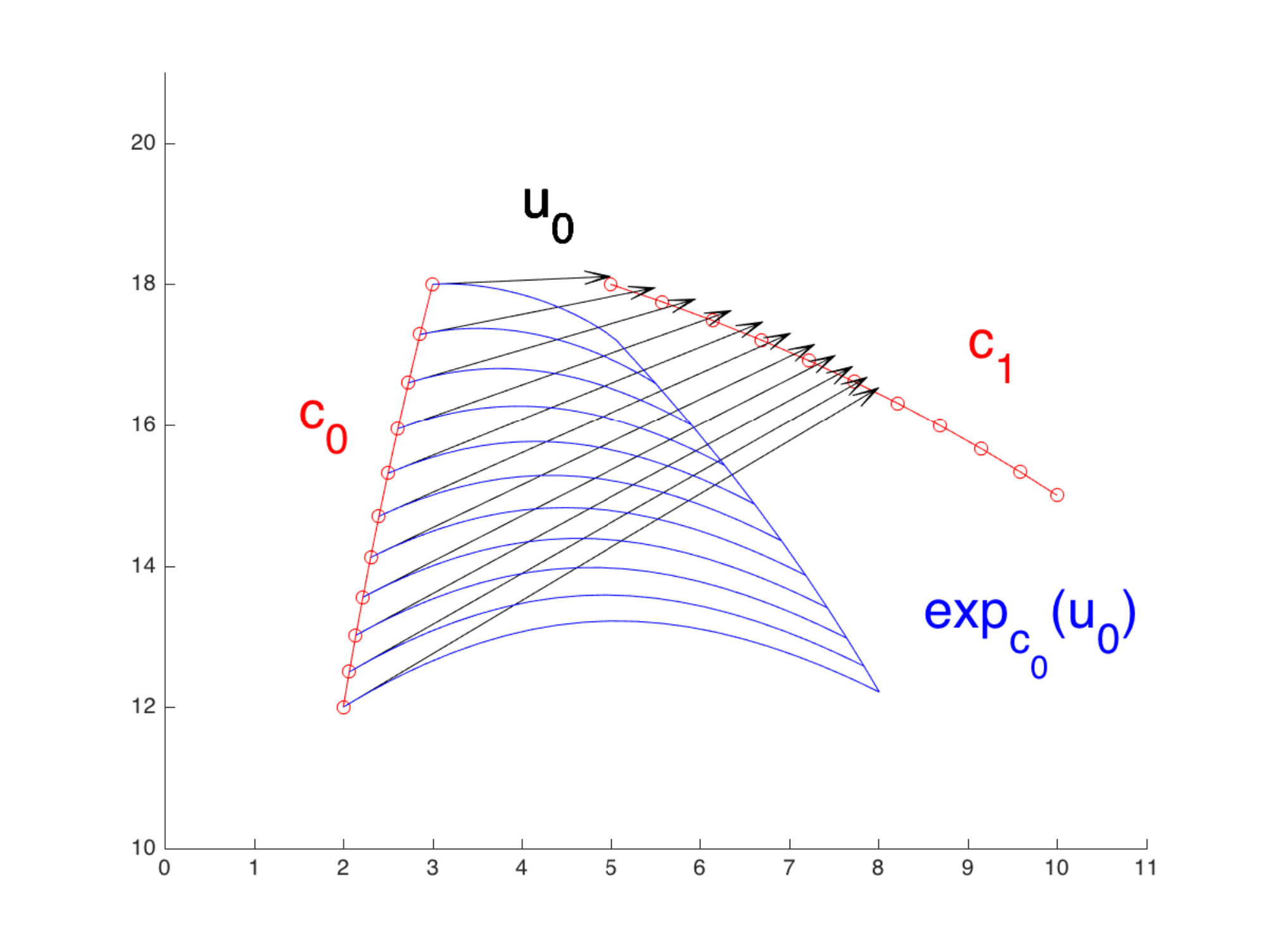}} 
\subfloat{\includegraphics[width=0.32\textwidth]{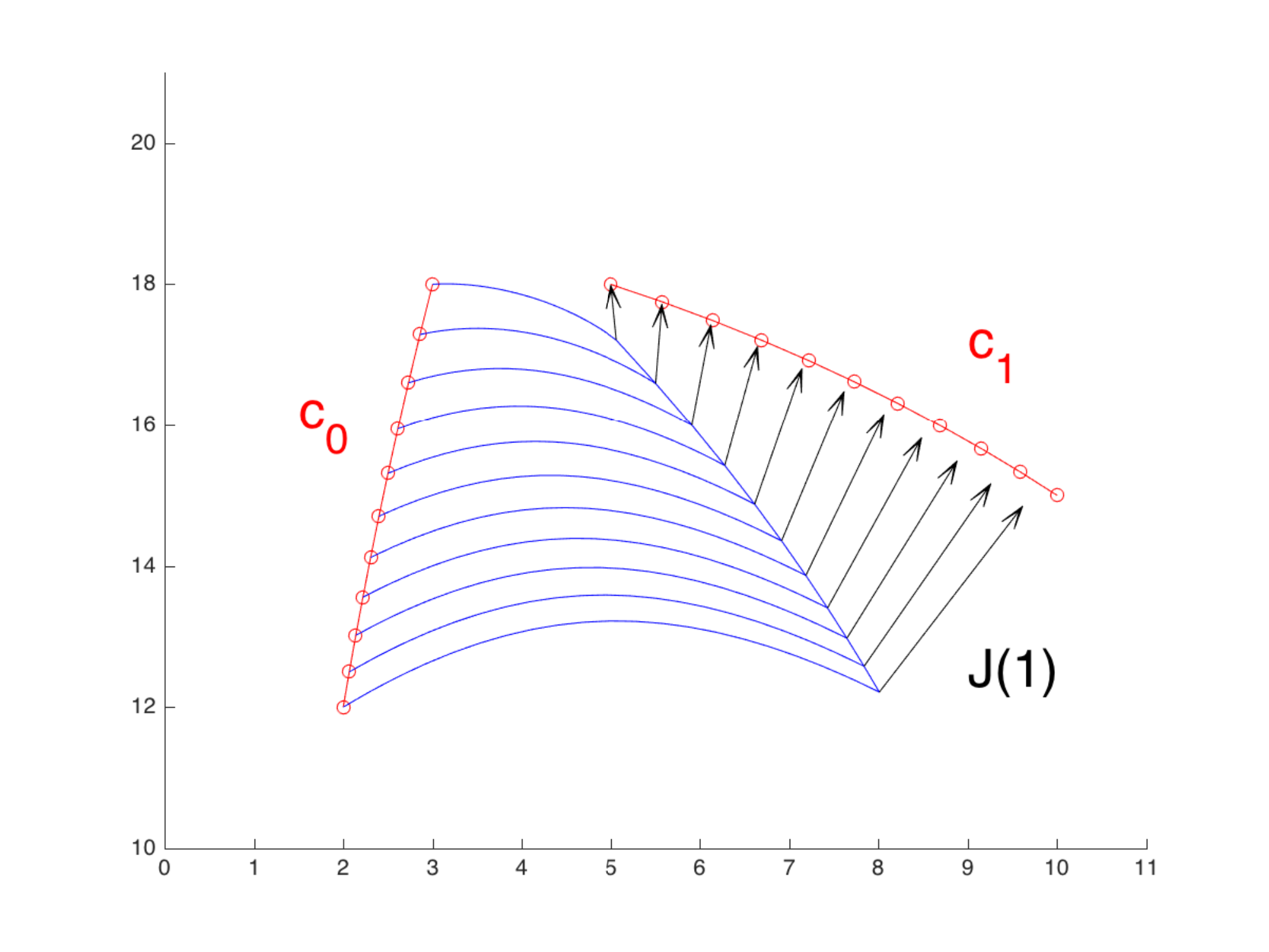}}\vspace{-0.7em}\\
\subfloat{\includegraphics[width=0.32\textwidth]{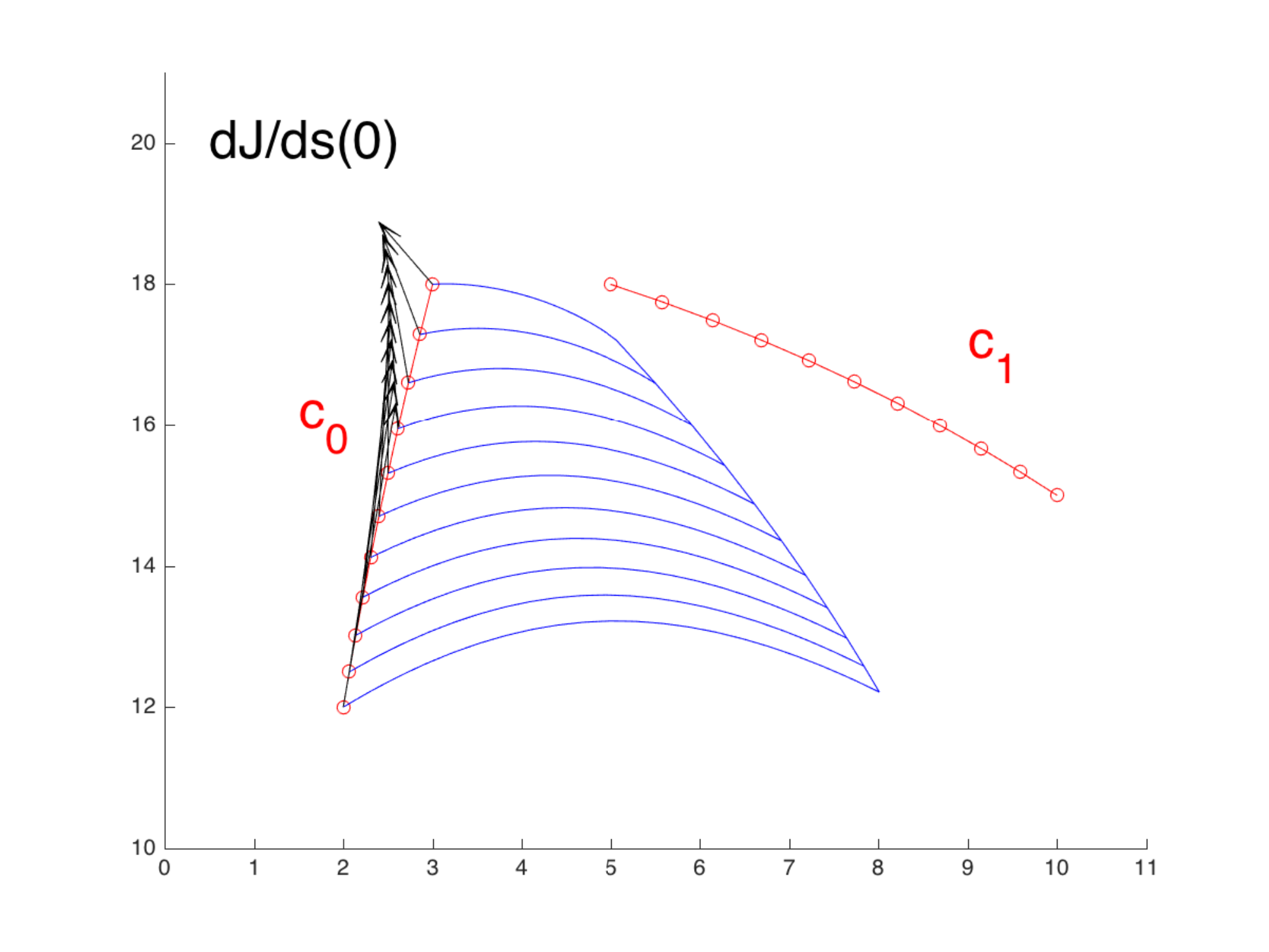}}
\subfloat{\includegraphics[width=0.32\textwidth]{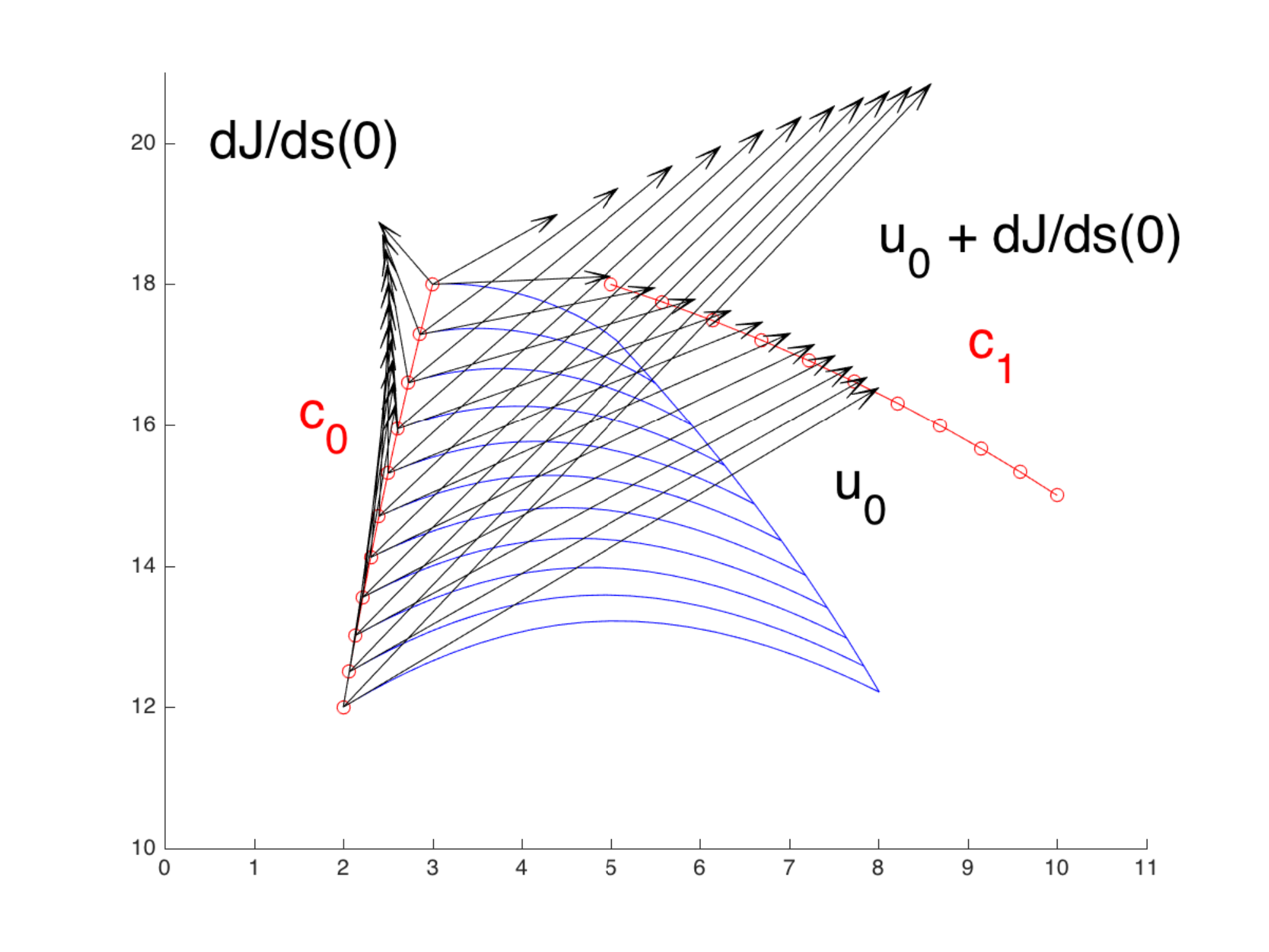}}
\subfloat{\includegraphics[width=0.32\textwidth]{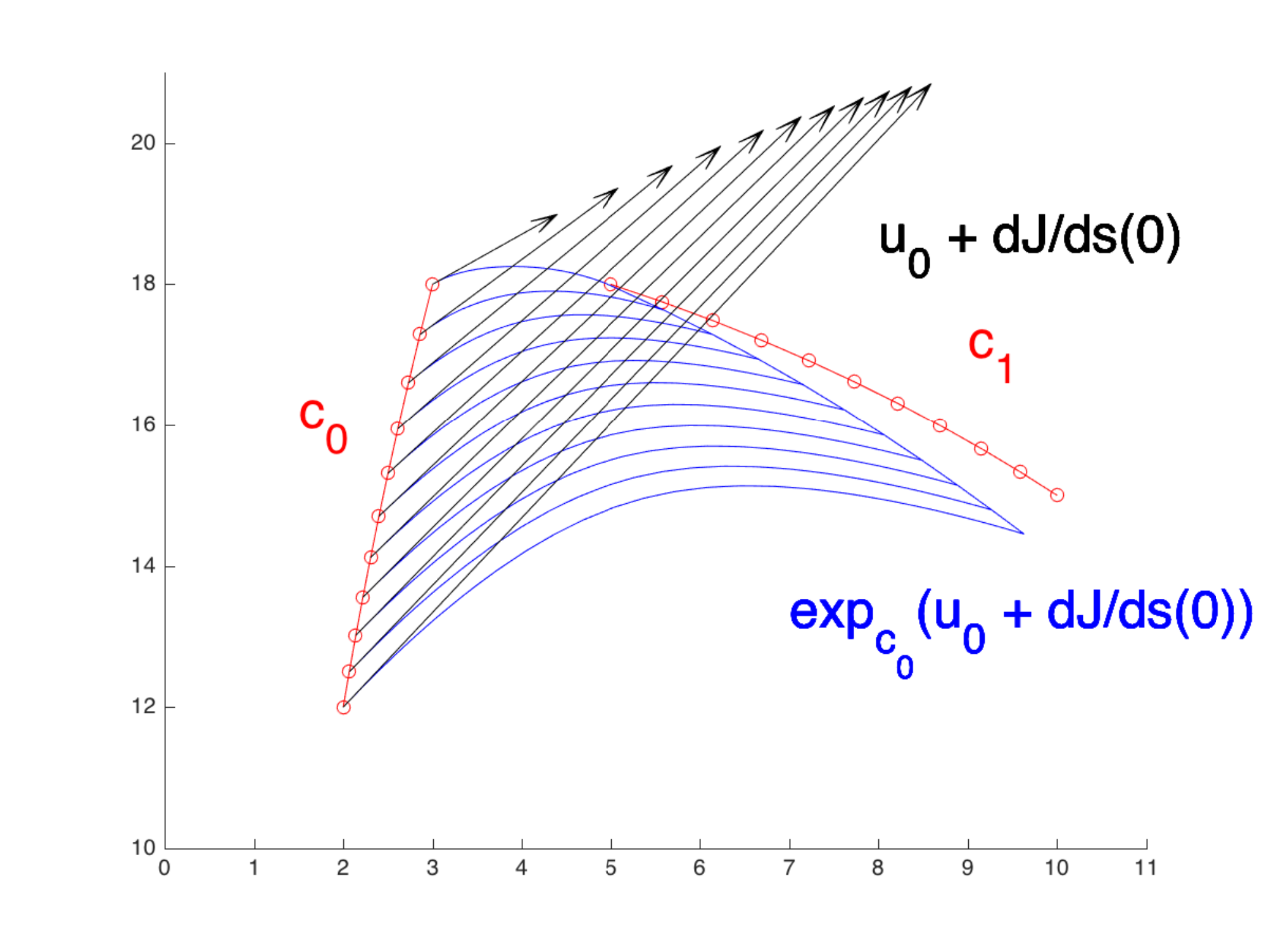}}\vspace{-0.7em}\\
\subfloat{\includegraphics[width=0.32\textwidth]{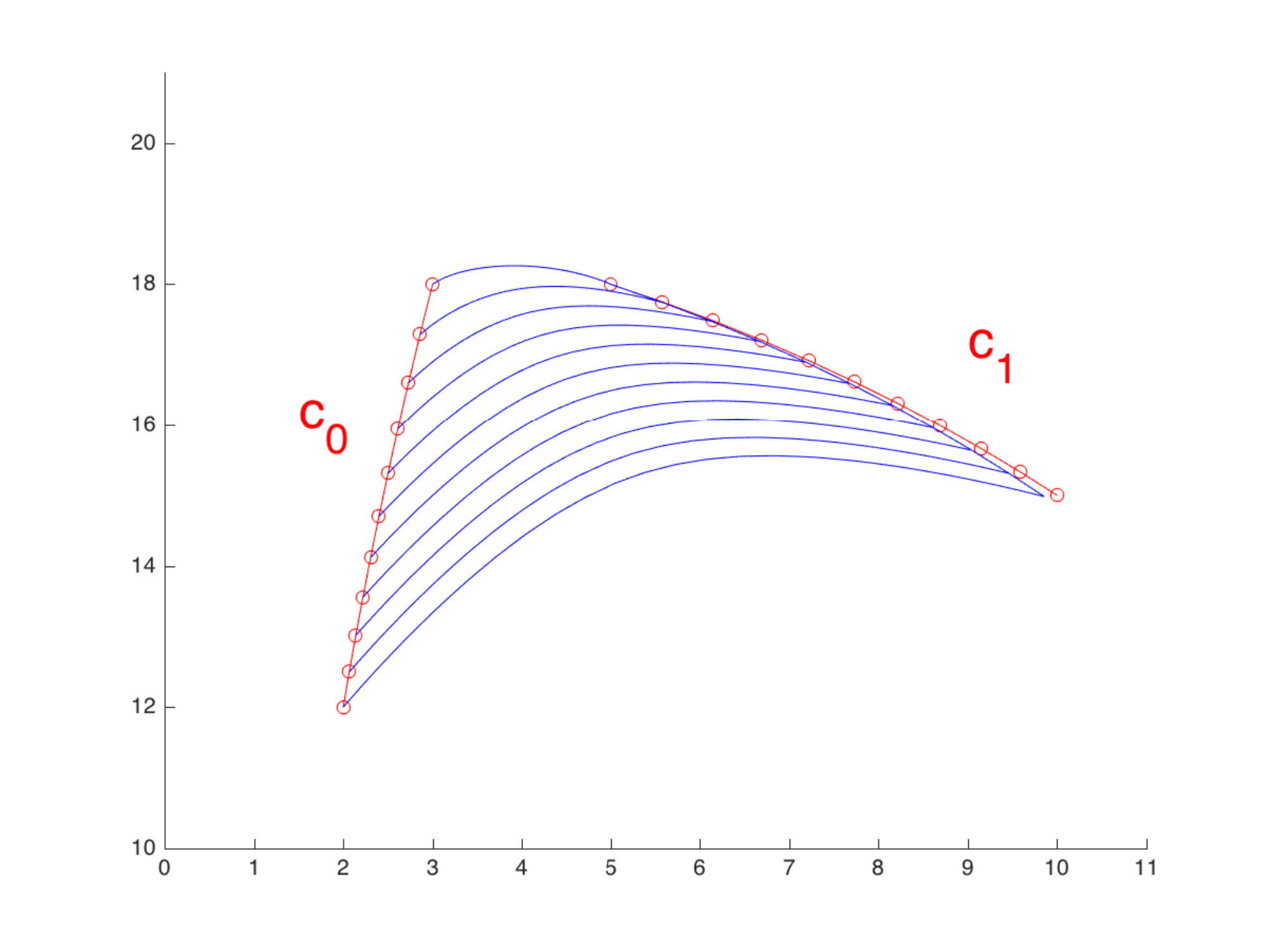}}
\subfloat{\includegraphics[width=0.32\textwidth]{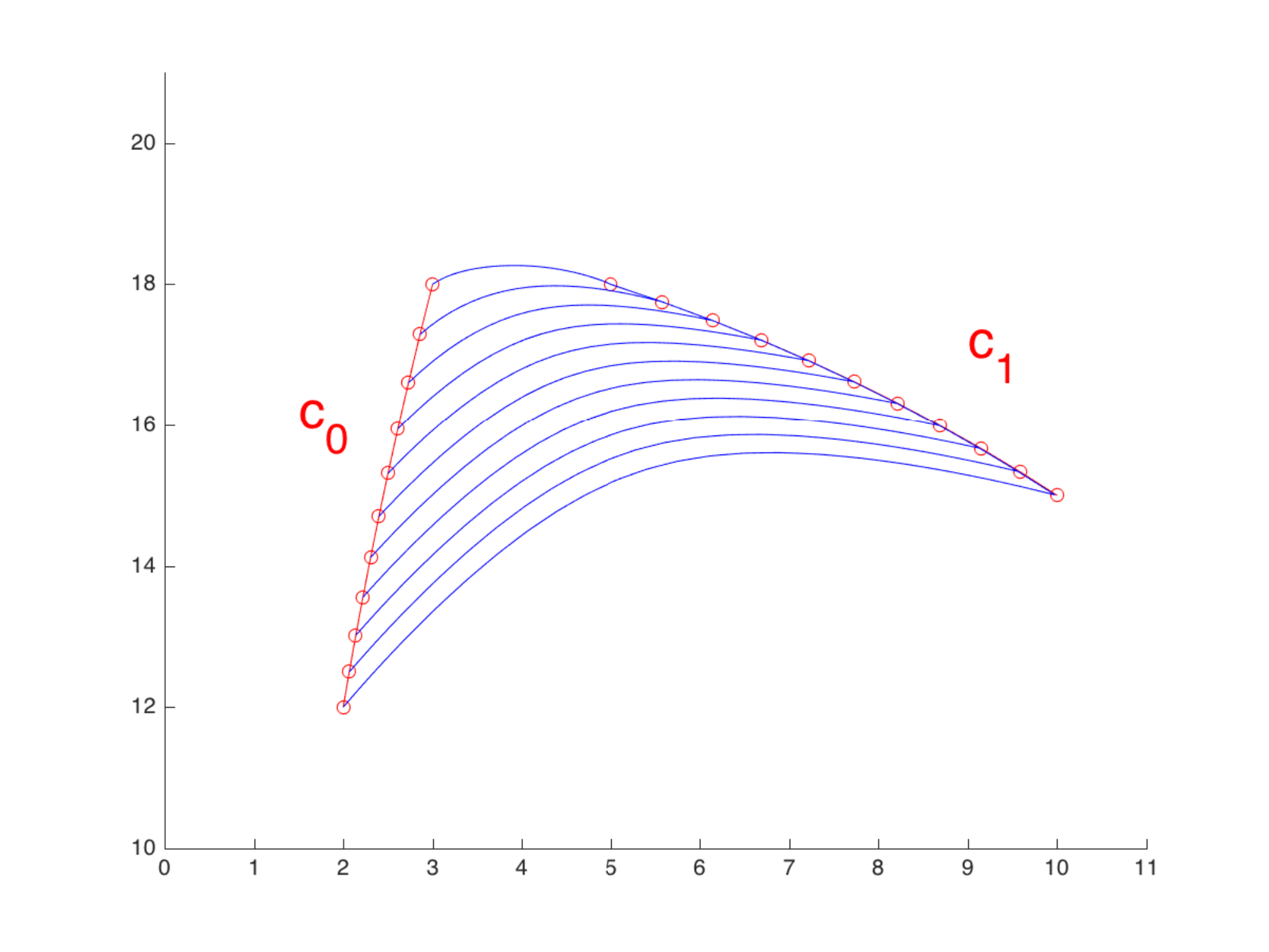}}
\caption{Geodesic shooting between two geodesics of the upper half-plane $\mathbb H$. The first six subfigures constitute the steps of the first iteration of the algorithm, and the last two show only the result of the two following iterations.}
\label{fig:toyex}
\end{figure}

Now, let us characterize the Jacobi fields of $\mathcal M$ to obtain the function $\phi$. A Jacobi field is a vector field that describes the way geodesics spread out on a manifold. Consider $a \mapsto c(a,\cdot,\cdot)$, $a \in (-\epsilon, \epsilon)$, a family of geodesics in $\mathcal M$, that is for each $a\in(-\epsilon, \epsilon)$, $[0,1] \ni s \mapsto c(a,s,\cdot)$ is a geodesic of $\mathcal M$. Then for all $a$, $c(a,\cdot,\cdot)$ verifies the geodesic equations
\begin{subequations} 
\begin{align}
& \nabla_sc_s(a,s,0) + r(a,s,0)  = 0 , \quad \forall s \label{jacobigeod1}\\
& \nabla_s\nabla_s q(a,s,t) + \left\|q(a,s,t) \right\| \left( r(a,s,t) + r(a,s,t)^T \right) = 0 , \quad \forall t,s, \label{jacobigeod2}
\end{align}
\end{subequations}
where $q = c_t/\sqrt{\left\| c_t \right\|}$ is the SRV representation of $c$ and $r$ is given by
\begin{equation*}
r(a,s,t) = \int_t^1 \mathcal R(q,\nabla_sq)c_s(a,s,\tau)^{\tau,t} \mathrm d\tau.
\end{equation*} 
Recall that we use the notation $w^T = \langle w,v(a,s,t) \rangle v(a,s,t)$ with $v = c_t/ \|c_t\|$ for the tangential component of a tangent vector $w \in T_{c(a,s,t)}M$. To characterize the way these geodesics spread out, we consider the Jacobi field $J : [0,1] \rightarrow T\mathcal M$,
\begin{equation*}
J(s,\cdot) = \left.\frac{\partial}{\partial a} \right|_{a=0}c(a,s,\cdot).
\end{equation*}
By decomposing $\nabla_s\nabla_sJ(s,0)$ and $\nabla_t\nabla_s\nabla_sJ(s,\tau)$ we can write the second order variation of $J$ as
\begin{equation}
\begin{split}
\nabla_s\nabla_s&J(s,t) \,\, = \,\,\, \left[ \left. \big(\nabla_a\nabla_s c_s + \mathcal R(c_s, J)c_s\big) \right|_{a=0,t=0} \right]^{0,t} \label{jacobi} \\
&+ \int_0^t \left[ \left.\big(\nabla_s\nabla_s\nabla_t J + \mathcal R(c_t,c_s)\nabla_sJ + \nabla_s \left(\mathcal R(c_t,c_s)J\right) \big)\right|_{a=0,t=\tau} \right ]^{\tau,t} \mathrm d\tau.
\end{split}
\end{equation}
The term $\nabla_s\nabla_s\nabla_tJ$ can be expressed as a function of $\nabla_s\nabla_s\nabla_aq$ by twice differentiating the equation $\nabla_aq = \|c_t\|^{-1/2}(\nabla_ac_t - \tfrac{1}{2}{\nabla_ac_t}^T)$ according to $s$. This gives
\begin{align*}
\nabla_s\nabla_s\nabla_aq &= \nabla_s\nabla_s(\|c_t\|^{-1/2})(\nabla_ac_t - \tfrac{1}{2} {\nabla_ac_t}^T) + 2 \nabla_s(\|c_t\|^{-1/2})\Big(\nabla_s\nabla_ac_t\\
&- \tfrac{1}{2} {\nabla_s\nabla_ac_t}^T -\tfrac{1}{2} \langle \nabla_ac_t, \nabla_sv\rangle v - \tfrac{1}{2} \langle \nabla_ac_t,v \rangle \nabla_sv\Big) + \|c_t\|^{-1/2}\Big( \nabla_s\nabla_s\nabla_ac_t\\
&- \tfrac{1}{2}{\nabla_s\nabla_s\nabla_ac_t}^T-\langle \nabla_s\nabla_ac_t,\nabla_sv\rangle v - \langle \nabla_s\nabla_ac_t,v\rangle \nabla_sv - \langle \nabla_ac_t,\nabla_sv\rangle \nabla_sv\\
&-\tfrac{1}{2} \langle \nabla_ac_t,\nabla_s\nabla_sv\rangle -\tfrac{1}{2}\langle \nabla_ac_t, v\rangle \nabla_s\nabla_sv\Big).
\end{align*}
Since $\nabla_ac_t = \nabla_tc_a = \nabla_aJ$ for $a=0$, we know that the term we are looking for is $\nabla_s\nabla_s\nabla_tJ = \nabla_s\nabla_s\nabla_ac_t$. Noticing that $W = Z - \tfrac{1}{2}Z^T$ is equivalent to $Z = W + W^T$ we get
\begin{equation}
\nabla_s\nabla_s\nabla_t J = W + W^T,
\end{equation} 
\begin{equation}
\label{w}
\begin{split}
W &= \big\langle \nabla_s\nabla_t J, \nabla_s v \big\rangle v + \big\langle \nabla_s\nabla_tJ, v \big\rangle \nabla_sv + \big\langle \nabla_tJ, \nabla_sv \big\rangle \nabla_sv \\
& + \tfrac{1}{2} \big\langle \nabla_tJ, \nabla_s\nabla_sv \big\rangle v + \tfrac{1}{2} \big\langle \nabla_tJ, v \big\rangle \nabla_s\nabla_s v + \sqrt{\left\| c_t \right\|} \Big[ \nabla_a\nabla_s\nabla_s q \\
&- 2 \nabla_s\big( \left\| c_t \right\|^{-1/2} \big) \Big( \nabla_s\nabla_tJ - \tfrac{1}{2} {\nabla_s\nabla_tJ}^T - \tfrac{1}{2} \langle \nabla_tJ, \nabla_sv\rangle v- \tfrac{1}{2} \langle\nabla_tJ,v\rangle \nabla_sv \Big)\\
& - \nabla_s\nabla_s\big( \left\| c_t \right\|^{-1/2} \big) \left( \nabla_t J - \tfrac{1}{2} \nabla_tJ^T \right) + \mathcal R(c_s,J)\nabla_sq + \nabla_s \left(\mathcal R(c_s,J)q\right) \Big].
\end{split}
\end{equation}
The terms $\nabla_a\nabla_sc_s(0,s,0)$ and $\nabla_a\nabla_s\nabla_sq(0,s,\tau)$ for all $\tau \in[0,1]$ can be obtained by differentiating the geodesic equations \eqref{jacobigeod1} and \eqref{jacobigeod2}
\begin{align*}
&\nabla_a\nabla_sc_s(0,s,0) + \nabla_a r(0,s,0) = 0, \quad \forall s \\
&\nabla_a\nabla_s\nabla_sq(0,s,t) + \nabla_a \left\| q(0,s,t) \right\| \left( r(0,s,t) + r(0,s,t)^T \right) \\
& \hspace{10em} + \left\| q(0,s,t) \right\| \left( \nabla_ar(0,s,t) + \nabla_a\left(r(0,s,t)^T\right) \right) = 0, \quad \forall t,s.
\end{align*}
The first one gives 
\begin{equation}
\label{nanscs}
\nabla_a\nabla_sc_s(s,0) = - \nabla_a r(s,0),
\end{equation}
and for all $s$ and $t$ we get
\begin{equation}
\label{nansnsq}
\begin{split}
\nabla_a\nabla_s\nabla_sq = - \sqrt{\|c_t\|} \Big( \nabla_ar +\nabla_ar^T \Big) - \frac{1}{\sqrt{\|c_t\|}} \bigg( \big\langle r,&\nabla_tJ\big\rangle v + \big\langle r, v \big\rangle \nabla_tJ \\
&+ \frac{1}{2} \big\langle \nabla_tJ,v \big\rangle \left( r - 3 r^T \right)\bigg).
\end{split}
\end{equation}
The only term left to compute is the variation $\nabla_ar$, which is by definition
\begin{equation*}
\nabla_ar(0,s,t) = \int_t^1 \nabla_aV_t(0,s,\tau) \, \mathrm d\tau,
\end{equation*}
if we define $V_t$ for any fixed $t$ by
\begin{equation*} 
V_t(a,s,\tau) = \left[ \mathcal R(q,\nabla_sq)c_s(a,s,\tau) \right]^{\tau,t}, \quad \tau\in [t,1].
\end{equation*}
Since the covariant derivative of $V_t$ in $\tau$ vanishes, we can write for any $t\leq \tau \leq 1$
\begin{equation*}
\nabla_aV_t(0,s,\tau) = \nabla_aV_t(0,s,t) + \int_t^\tau \left.\mathcal R(c_t,J)\right|_{t} \left(V_t(0,s,u)\right) \, \mathrm du.
\end{equation*}
Integrating this equation according to $\tau$ from $t$ to $1$ we obtain
\begin{equation}
\label{nar}
\nabla_ar(0,s,t) = (1-t) \nabla_aV_t(0,s,t) + \left.\mathcal R(c_t,J)\right|_{t} \left( \int_t^1 (1-\tau) V_t(0,s,\tau) \, \mathrm d\tau\right),
\end{equation}
where, since $V_t(0,s,t)=\mathcal R(q,\nabla_sq)c_s(0,s,t)$, we get for $\tau=t$
\begin{equation}
\label{navt}
\left.\nabla_aV_t\right|_t = \nabla_J\mathcal R(q,\nabla_sq)c_s + \mathcal R(\nabla_aq, \nabla_sq) c_s + \mathcal R(q,\nabla_a\nabla_sq)c_s + \mathcal R(q,\nabla_sq)\nabla_sJ,
\end{equation}
with finally
\begin{subequations}
\begin{align}
\nabla_aq &= \frac{1}{\sqrt{\|c_t\|}} \left( \nabla_tJ - \tfrac{1}{2}\nabla_tJ^T\right), \label{naq} \\
\nabla_a\nabla_sq &= \frac{1}{\sqrt{\|c_t\|}} \left( \nabla_s\nabla_tJ - \tfrac{1}{2} \nabla_s\nabla_tJ^T -\tfrac{1}{2} \big\langle \nabla_tJ,\nabla_sv\big\rangle v - \tfrac{1}{2} \big\langle \nabla_tJ,v\big\rangle \nabla_sv \right) \label{nansq} \\
& \qquad \qquad \qquad \qquad \,\,\,\,\, + \nabla_s\left( \|c_t\|^{-1/2} \right) \left( \nabla_tJ - \tfrac{1}{2} \nabla_tJ^T \right) + \mathcal R(J,c_s)q. \nonumber
\end{align}
\end{subequations}
%----------------- FIGURE -------------------
\begin{figure}
\subfloat{\includegraphics[width=0.26\textwidth]{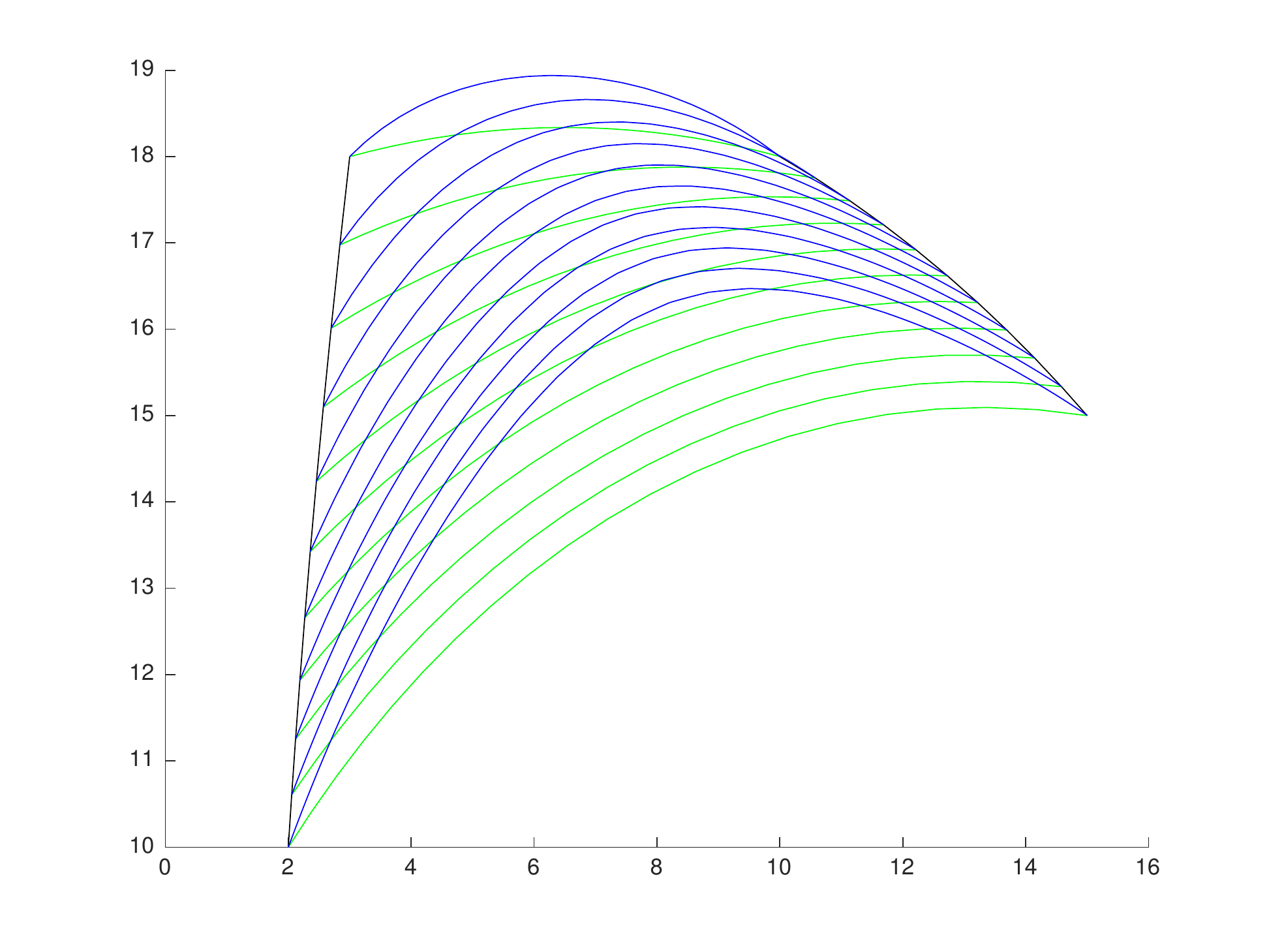}}\hspace{-1em}
\subfloat{\includegraphics[width=0.26\textwidth]{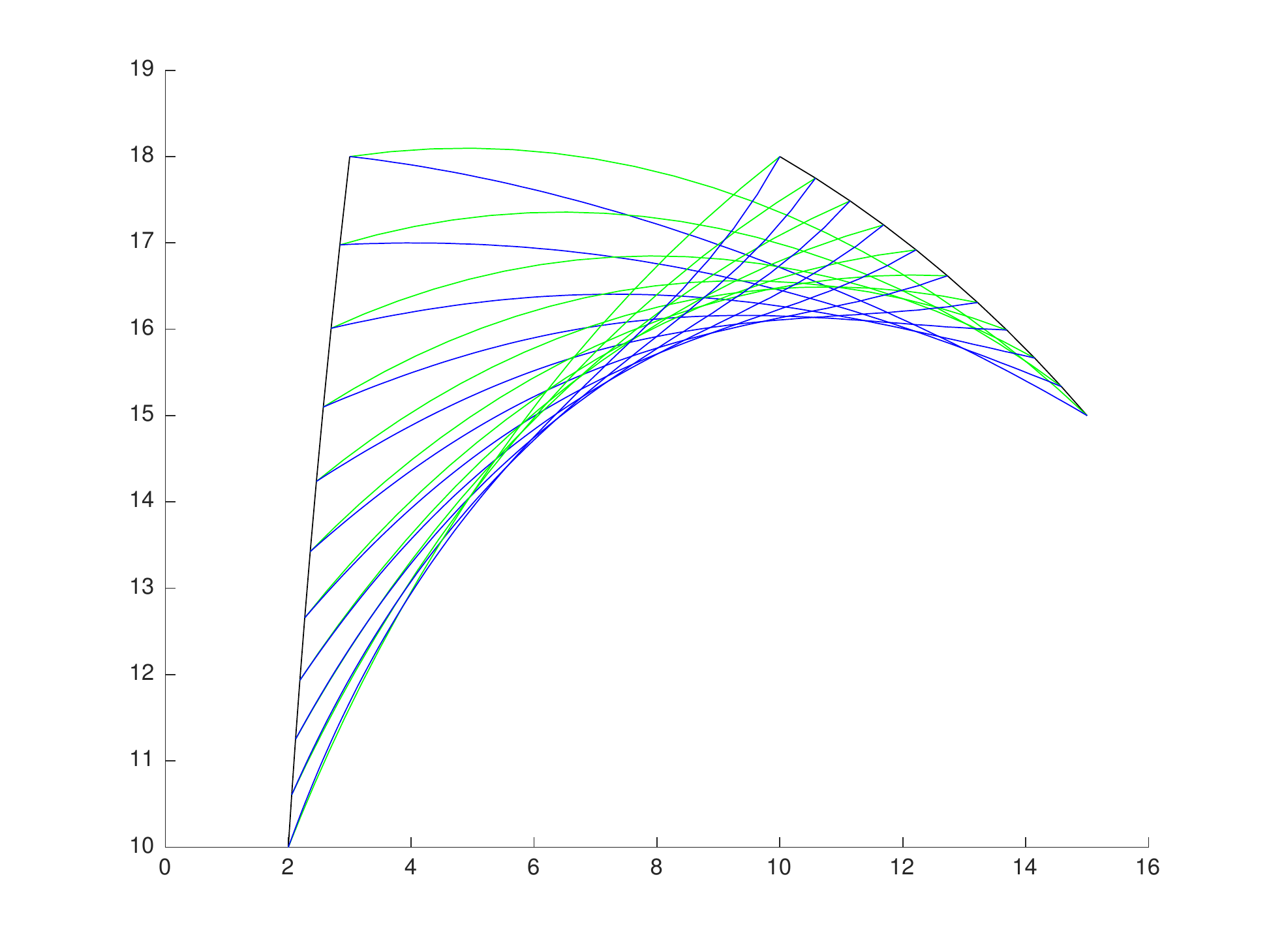}}\hspace{-1em}
\subfloat{\includegraphics[width=0.26\textwidth]{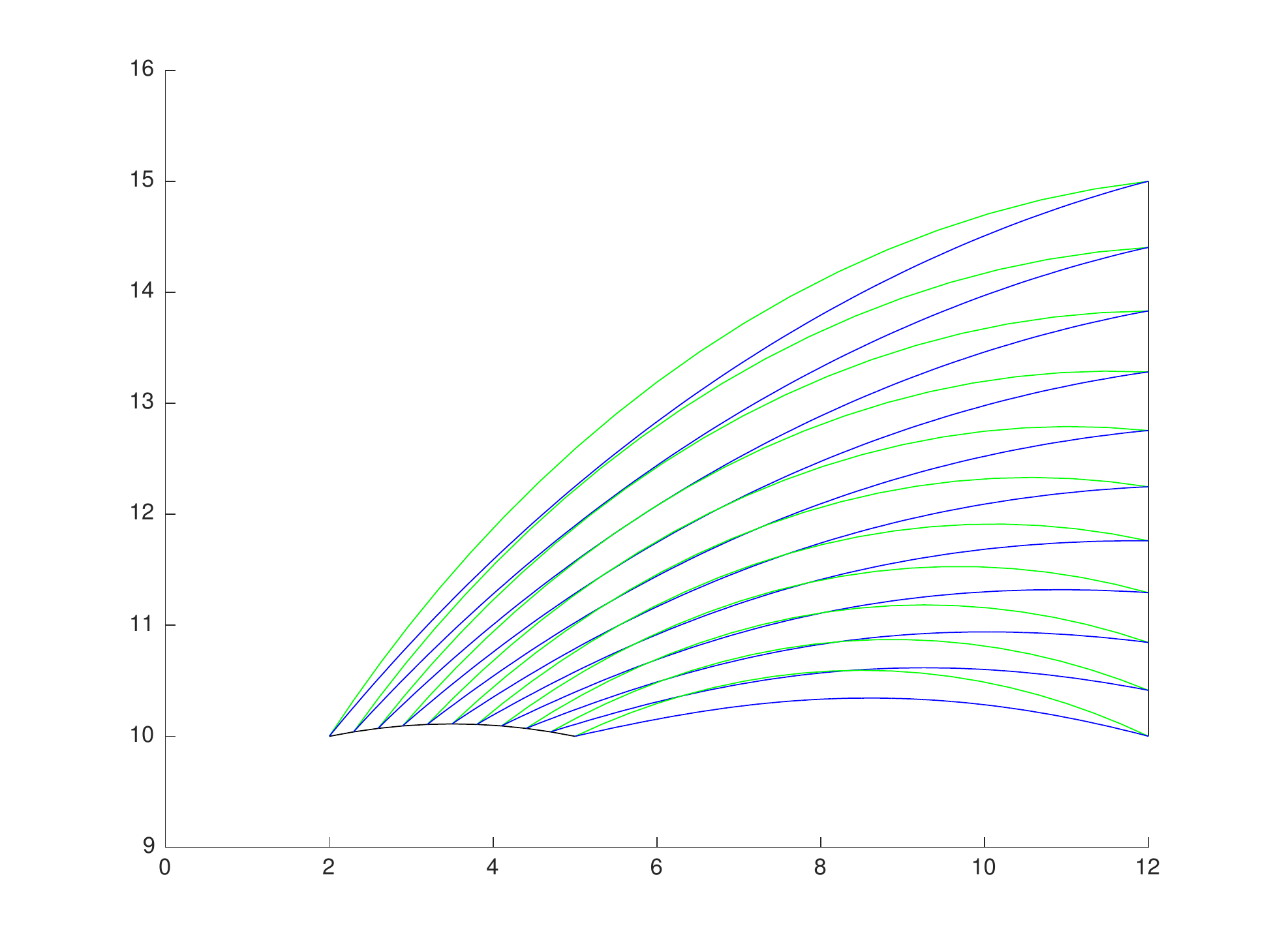}}\hspace{-1em}
\subfloat{\includegraphics[width=0.26\textwidth]{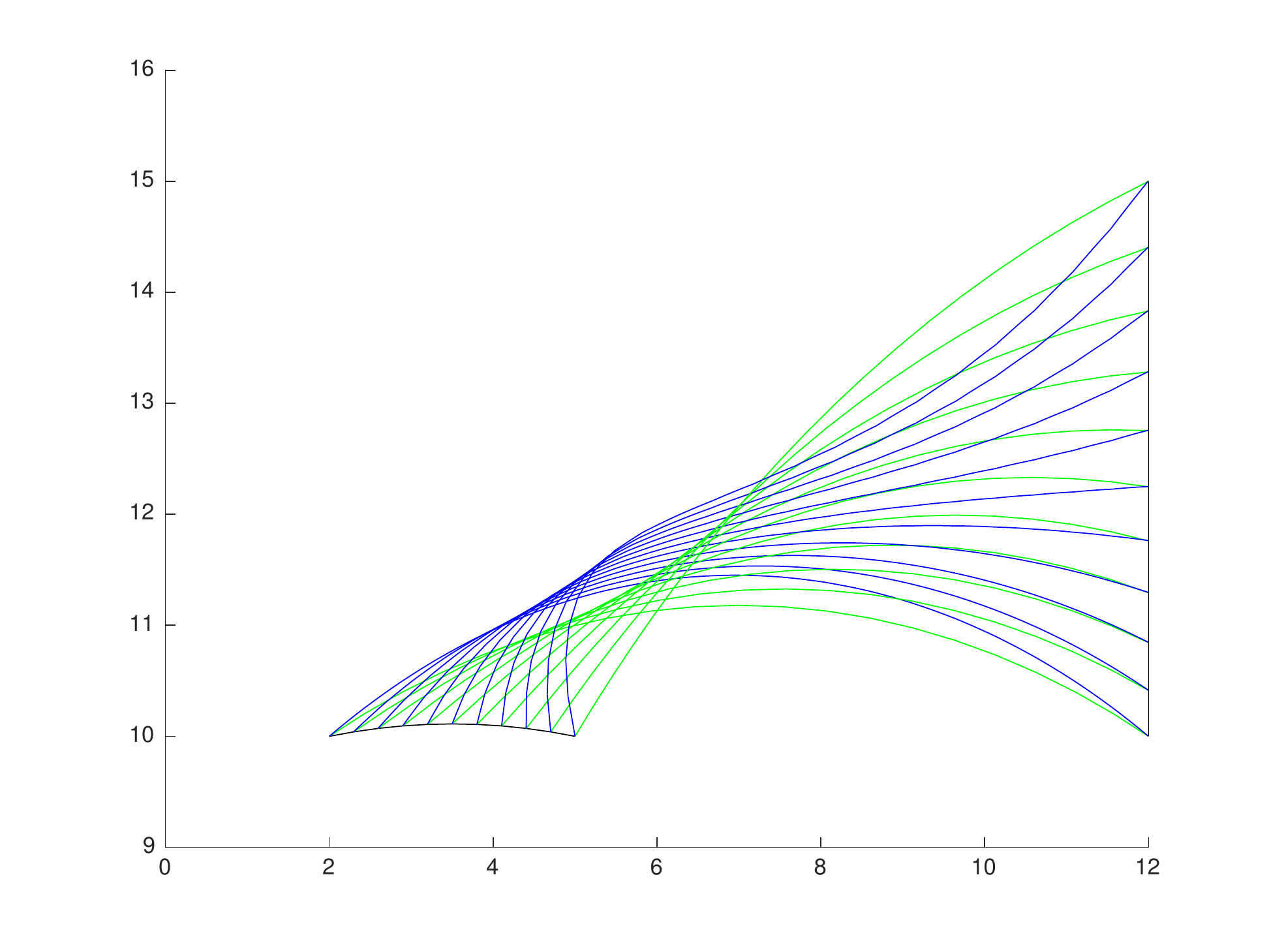}}
\caption{Optimal deformations between pairs of geodesics (in black) of the upper half-plane $\mathbb H$, for our metric (in blue) and for the $L^2$-metric (in green). The orientation of the right-hand curve is inverted in the second image compared to the first, and in the fourth compared to the third.}
\label{fig:toyex2}
\end{figure}

We can notice that, however complicated, the numbered equations \eqref{jacobi} to \eqref{nansq} when put together define a partial differential equation verified by the Jacobi field $J$. They allow us to iteratively compute $J(s+\epsilon,\cdot)$ and $\nabla_sJ(s+\epsilon,\cdot)$, for a fixed step $\epsilon >0$, knowing $J(s,\cdot)$ and $\nabla_sJ(s,\cdot)$. Indeed, we can estimate $\nabla_tJ(s,\cdot)$ since $J(s,t)$ is known for all $t$, as well as $\nabla_t\nabla_sJ(s,\cdot)$ since $\nabla_sJ(s,t)$ is known for all $t$, and finally $\nabla_s\nabla_tJ = \nabla_t\nabla_sJ + \mathcal R(c_s,c_t)J$. Assuming that we are able to compute the covariant derivative $\nabla_J\mathcal R$ of the curvature tensor, for example if we are in a symmetric space (then it is zero), we obtain an algorithm to compute the Jacobi fields in the space of curves. To summarize :
% ---------------- ALGORITHM -----------------------
\begin{algorithm}[Jacobi fields in the space of curves in a symmetric space] 
\label{alg:jacobi}
\leavevmode\par \noindent
Input : $c$, $J_0$ and $W$. \\
Initialization : Set $J(0,t) = J_0(t)$ and $\nabla_sJ(0,t) = W(t)$ for $t\in[0,1]$.\\
Heredity : For $j=0,\hdots, m-1$, set $s=j\epsilon$ with $\epsilon = 1/m$ and
\begin{enumerate} 
\item for all $t$, set \vspace{-0.5em}
\begin{align*}
\nabla_tJ(s,t) &= \lim_{\delta\rightarrow 0}\frac{1}{\delta} \left( J(s,t+\delta)^{t+\delta,t} - J(s,t) \right), \\
\nabla_t\nabla_sJ(s,t) &= \lim_{\delta\rightarrow 0}\frac{1}{\delta} \left( \nabla_sJ(s,t+\delta)^{t+\delta,t} - \nabla_sJ(s,t) \right), \\[5pt]
\nabla_s\nabla_tJ(s,t) &= \nabla_t\nabla_sJ(s,t) + \mathcal R(c_s,c_t)J(s,t).
\end{align*}
\item Compute $r(s,t) = \int_t^1 \mathcal R(q,\nabla_sq)c_s(s,\tau)^{\tau,t} \mathrm d\tau$ for all $t\in[0,1]$.
\vspace{0.5em}
\item Compute $\nabla_aq(0,s,t)$, $\nabla_s\nabla_aq(0,s,t)$ and $\nabla_aV_{t}(0,s,t)$ for all $t$ using \eqref{naq}, \eqref{nansq} and \eqref{navt}, and deduce $\nabla_ar(0,s,t)$ using Equation \eqref{nar}.
\vspace{0.5em}
\item Compute $\nabla_a\nabla_sc_s(0,s,0)$ and $\nabla_a\nabla_s\nabla_sq(0,s,t)$ for all $t$ using equations \eqref{nanscs} and \eqref{nansnsq}.
\vspace{0.5em}
\item Compute $W(s,t)$ using \eqref{w} and $\nabla_s\nabla_s\nabla_t J(s,t) = W(s,t) + W^T(s,t)$, and deduce $\nabla_s\nabla_sJ(s,t)$ for all $t$ using Equation \eqref{jacobi}.
\vspace{0.5em}
\item Finally, for all $t\in[0,1]$, set
\begin{align*}
J(s+\epsilon,t) &= \left[ \,\,J(s,t) + \epsilon \nabla_sJ(s,t) \,\,\right]^{s,s+\epsilon}, \\
\nabla_sJ(s+\epsilon,t) &= \left[ \,\,\nabla_sJ(s,t) + \epsilon \nabla_s\nabla_sJ(s,t) \,\,\right]^{s,s+\epsilon}.
\end{align*}
\end{enumerate}
Output : $J(1)$.
\end{algorithm}
Using a discretization of Algorithms \ref{alg:expmap}, \ref{alg:geodshoot} and \ref{alg:jacobi}, we are able to compute an approximation of the optimal deformation between two curves, as shown in a toy example in Figure \ref{fig:toyex}. In this simple case we perform geodesic shooting between two geodesics $c_0$ and $c_1$ (in red) of the hyperbolic half-plane $\mathbb H$. The reasons for our interest in this particular space, as well as the tools needed to work in it, are given in the next section. The first two lines of Figure \ref{fig:toyex} show the different steps of the first iteration of geodesic shooting, and the last line gives only the last step of the following two iterations. In this simple case, we can see that we converge in only three iterations. Further toy examples are given in Figure \ref{fig:toyex2}, where we show the optimal deformations between pairs of geodesics of $\mathbb H$ (in blue), compared to the $L^2$-geodesics (in green). We can see in the first image that our metric has a tendency to "shrink" the curves in the center of the deformation compared to the $L^2$-metric. We also show the influence of the orientation of the curves, on which the deformations depend. We do not give the details of the discretization used for these examples or the ones in the following section here. A detailed description of this discrete model will be given in a forthcoming paper.

%-----------------------------------------------------------------------------------------------------------------------------------------------------------------------------
\section{Example : curves in the hyperbolic half-plane $\mathbb H$}
%-----------------------------------------------------------------------------------------------------------------------------------------------------------------------------

In this section we consider the case where the base manifold $M$ is a symmetric manifold of negative curvature, the two-dimensional hyperbolic space $\mathbb H$. We first explain why this space can be interesting for applications, namely as it coincides with the statistical manifold of Gaussian densities equipped with the Fisher Information metric. Then we give some basic tools -- exponential map, logarithm map, curvature tensor -- needed to implement the previous algorithms in $\mathbb H$. Finally, we consider a specific application of curve analysis in that space, for the statistical study of locally stationary radar signals. We explain how this framework gives us curves lying in the hyperbolic plane, and we present some simulation results. 

\subsection{The hyperbolic half-plane as a statistical manifold}%---------------------------------------------------------------------------------
It is possible to adopt a geometrical point of view to solve problems in various fields such as statistical inference, information theory or signal processing \cite{frech}, \cite{burbea}, \cite{amari}. This framework is given by information geometry. Each element of a parametric family of probability densities $\{ f(\cdot,\theta), \theta \in \Theta\}$ can be seen as a point in the manifold of parameters $\Theta$. Intuitively, it is easy to see that the Euclidean metric is not always appropriate to compare two probability distributions in that space. For example, each univariate Gaussian distribution can be identified with its mean and standard deviation $(m,\sigma)$ in the upper half-plane $\mathbb R \times \mathbb R^*_+$. As explained in \cite{costa}, two univariate Gaussian densities $\mathcal N(m_1,\sigma_1)$ and $\mathcal N(m_2,\sigma_1)$ with different means but the same standard deviation get "closer" to each other as their common standard deviation increases, meaning that intuitively the distance between the points of coordinates $(m_1,\sigma_1)$ and $(m_2,\sigma_1)$ in the upper half-plane should be greater than the distance between the points $(m_1,\sigma_2)$ and $(m_2,\sigma_2)$ for $\sigma_2>\sigma_1$. A more pertinent Riemannian structure on the space of parameters $\Theta$ is the one induced by the Fisher information metric, defined in its matrix form as the Fisher information. If the parameter $\theta \in \mathbb R^d$ is $d$-dimensional and $\mathbb E$ denotes the expected value,
\begin{equation*}
g_{ij}(\theta) = I(\theta)_{ij}=\mathbb E\left[ \left( \frac{\partial}{\partial \theta_i} \ln f(X;\theta)\right) \left(  \frac{\partial}{\partial \theta_j} \ln f(X;\theta)\right) \right],
\end{equation*}
for any $1\leq i,j \leq d$ and $\theta = (\theta_1, \hdots, \theta_d)$. This metric is chosen, among other reasons, because it has statistical meaning : in parameter estimation, the Fisher information measures the "amount of information" on the parameter $\theta$ contained in data sampled from the density $f(\cdot,\theta)$; it also gives a fundamental limit to the precision at which one can estimate this $\theta$, in the form of the Cramer-Rao bound. In the case of univariate Gaussian densities $\mathcal N(m,\sigma^2)$, Fisher geometry amounts to hyperbolic geometry. More precisely, the space of parameters $(m,\sigma)$ equipped with the Fisher Information metric is in bijection with the hyperbolic half-plane via the change of variables $(m,\sigma) \mapsto (\frac{m}{\sqrt{2}},\sigma)$. Indeed, with this rescaling of the mean, the Fisher Information matrix becomes
\begin{equation*}
g_{ij}(m,\sigma) = \left[ \begin{matrix} \frac{1}{\sigma^2} & 0 \\ 0 & \frac{1}{\sigma^2} \end{matrix} \right],
\end{equation*}
which defines the Riemannian metric of the well-known hyperbolic half-plane. This is coherent with the example given above, since in the hyperbolic half-plane the distance between the points of coordinates $(m_1,\sigma)$ and $(m_2,\sigma)$ decreases as $\sigma$ increases for fixed values of $m_1$, $m_2$. The differential geometry of Gaussians has proved useful for applications, e.g. in image processing where in the image model, each pixel is represented by a univariate Gaussian distribution \cite{ang}, and in radar signal processing \cite{arn}, \cite{barb3}, \cite{pilte}, \cite{lb2}, as we will see in Section \ref{spectralestimation}.

\subsection{Geometry of the hyperbolic half-plane}%-----------------------------------------------------------------------------------------------
First, let us give a few tools which are necessary to work in the hyperbolic half-plane representation. Along with the Poincar\'e disk, the Klein model and others, the hyperbolic half-plane $\mathbb H = \{z=x+iy \in \mathbb C, \,y >0 \}$ is one of the representations of two-dimensional hyperbolic geometry. The Riemannian metric is given by
\begin{equation*}
ds^2 = \frac{dx^2 + dy^2}{y^2}.
\end{equation*}
This means that the scalar product between two tangent vectors $u=u_1 + i u_2$ and $v=v_1+iv_2$ at a point $z=x+iy$ is
\begin{equation*}
\langle u,v \rangle = \frac{u_1v_1+u_2v_2}{y^2}.
\end{equation*}
Using the usual formula (see e.g. \cite{doc}) to compute the Christoffel symbols, we can easily compute the covariant derivative of a vector field $v(t)=v_1(t)+iv_2(t)$ along a curve $c(t)=x(t)+iy(t)$ in $\mathbb H$. It is given by $\nabla_{\dot c(t)} v = X(t) + i Y(t)$ where
\begin{equation}
\label{cov}
X = \dot{v_1} - \frac{\dot x v_2 + \dot y v_1}{y}, \quad Y = \dot{v_2} + \frac{ \dot x v_1 - \dot y v_2}{y}.
\end{equation}
Let us now remind a well-known expression \cite{doc} for the Riemann curvature tensor in a manifold of constant sectional curvature. Recall that $\mathbb H$ has constant sectional curvature $K=-1$.
\begin{figure}
\centering
\includegraphics[width=10cm]{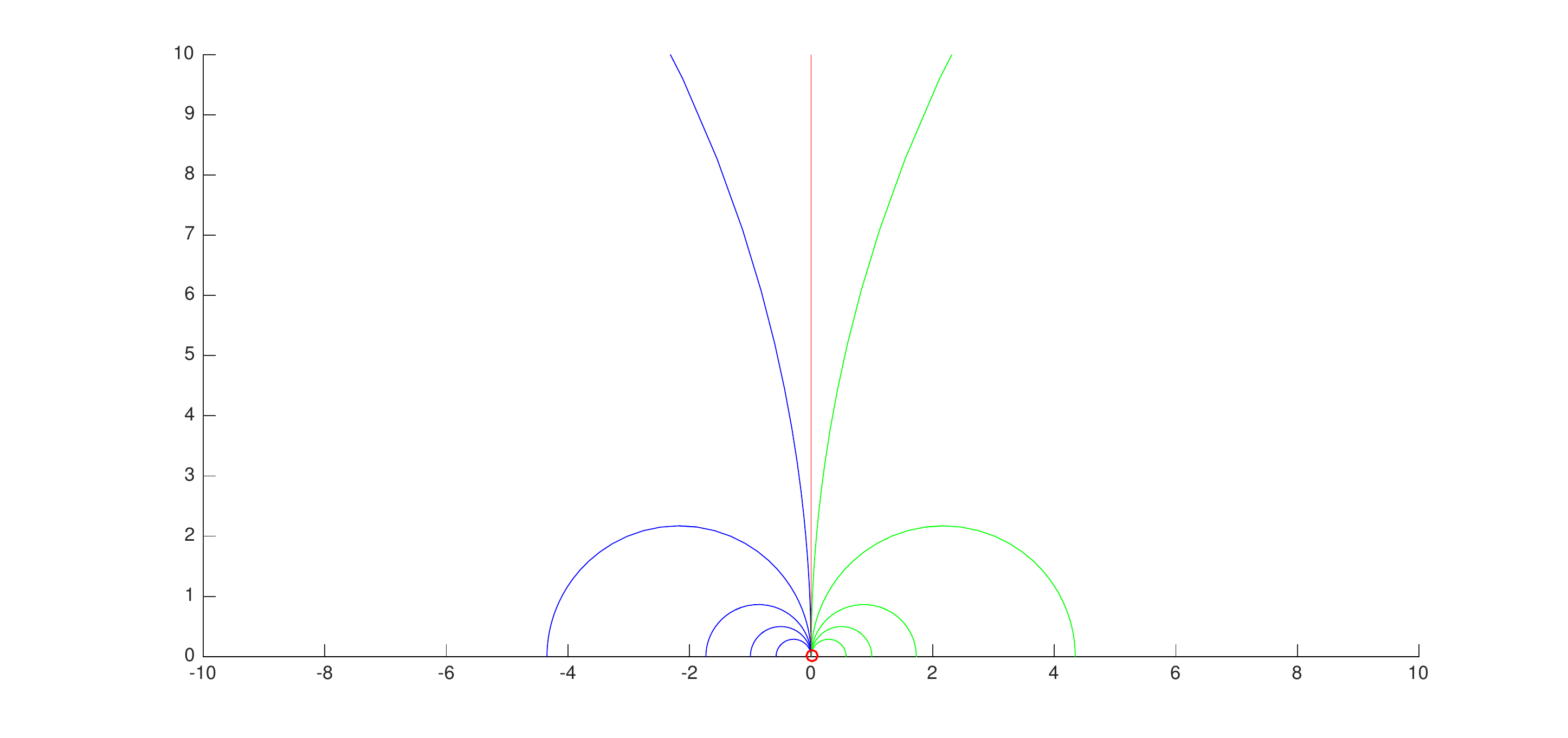}
\caption{Geodesics of the hyperbolic half-plane.}
\label{fig:hyp}
\end{figure}
%------------ PROPOSITION -------------
\begin{proposition}[Curvature tensor] 
Let $X, Y, Z$ be three vector fields on a manifold of constant sectional curvature $K$. The Riemann curvature tensor can be written
\begin{equation*}
\mathcal R(X,Y)Z = K \left( \langle Y, Z \rangle X - \langle X, Z \rangle Y \right).
\end{equation*}
\end{proposition}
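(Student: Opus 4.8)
The plan is to reduce the claim to the classical fact that an algebraic curvature tensor is completely determined by the sectional curvatures it induces. First I would pass to the associated $(0,4)$-tensor $R(X,Y,Z,T) = \langle \mathcal R(X,Y)Z, T\rangle$ and introduce the candidate tensor
\[
R_0(X,Y,Z,T) = K\big(\langle Y,Z\rangle\langle X,T\rangle - \langle X,Z\rangle\langle Y,T\rangle\big),
\]
whose associated $(1,3)$-tensor is exactly the right-hand side $K(\langle Y,Z\rangle X - \langle X,Z\rangle Y)$ that we wish to identify with $\mathcal R(X,Y)Z$. Since the curvature tensor is tensorial, it suffices to argue pointwise in $T_pM$, and establishing $R = R_0$ proves the proposition.

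Next I would verify that $R_0$ satisfies the four algebraic symmetries of a curvature tensor -- antisymmetry in the first two arguments, antisymmetry in the last two, symmetry under exchange of the two pairs, and the first Bianchi identity -- each of which follows by inspection from the symmetry of the metric. A one-line computation then gives, for any $X,Y$ spanning a $2$-plane,
\[
\frac{R_0(X,Y,Y,X)}{|X|^2|Y|^2 - \langle X,Y\rangle^2} = \frac{K\big(|X|^2|Y|^2 - \langle X,Y\rangle^2\big)}{|X|^2|Y|^2 - \langle X,Y\rangle^2} = K,
\]
so $R_0$ is an algebraic curvature tensor of constant sectional curvature $K$. By the constant sectional curvature hypothesis on $M$, the genuine curvature tensor $R$ has exactly the same sectional curvatures.

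The crux is the uniqueness lemma: if two $(0,4)$-tensors $R$ and $R_0$ both satisfy the curvature symmetries and induce the same sectional curvatures -- equivalently, if their difference $D = R - R_0$ satisfies $D(X,Y,Y,X)=0$ for all $X,Y$ -- then $D \equiv 0$. I expect this polarization argument to be the main obstacle. The idea is to substitute $X \mapsto X+W$ in $D(X,Y,Y,X)=0$ to obtain $D(X,Y,Y,W)+D(W,Y,Y,X)=0$, then polarize a second time in $Y$ and combine the resulting identities with the pair-exchange symmetry, the two antisymmetries, and the first Bianchi identity to force $D$ to vanish on all quadruples. Applying the lemma to $R$ and $R_0$ yields $R=R_0$, and raising the last index recovers $\mathcal R(X,Y)Z = K(\langle Y,Z\rangle X - \langle X,Z\rangle Y)$. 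As this uniqueness lemma is entirely standard, I would alternatively cite \cite{doc} for it and keep the explicit verification confined to the symmetries and the sectional curvature of $R_0$.
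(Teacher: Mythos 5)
The paper offers no proof of this proposition: it is stated as a reminder of a well-known fact with a reference to \cite{doc}, whose Chapter 4 argument (check that the candidate tensor satisfies the curvature symmetries and has constant sectional curvature $K$, then invoke the polarization/uniqueness lemma for algebraic curvature tensors) is exactly what you reproduce. Your proposal is correct, the sign conventions match the paper's (so that $\langle \mathcal R(X,Y)Y,X\rangle$ divided by the Gram determinant gives $K$), and it is essentially the same route as the cited source.
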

For the algorithms described above, we need to be able to compute the geodesic starting from a point $p\in\mathbb{H}$ at speed $u_0\in T_p\mathbb{H}$ -- in other words, the exponential map $u \mapsto \exp^{\mathbb H}_p(u)$ -- as well as the geodesic linking two points $p$ and $q$, with the associated initial vector speed -- the inverse $q \mapsto \log^{\mathbb H}_p(q)$ of the exponential map. The geodesics of the hyperbolic half-plane are vertical segments and half-circles whose origins are on the x-axis, as shown in Figure \ref{fig:hyp}, and they can be obtained as images of the vertical geodesic following the y-axis by a Moebius transformation $z \mapsto \frac{az+b}{cz+d}$, with $ad-bc = 1$. To be complete, we give the proofs of the three following propositions in the appendix.
%------------ PROPOSITION -------------
\begin{proposition}[Geodesics of $\mathbb H$ and logarithm map]
\label{prop:geodH}
Let $z_0 = x_0 + i y_0$ and $z_1 = x_1 + i y_1$ be two elements of $\mathbb H$. 
\begin{itemize}
\item If $x_0=x_1$, then the geodesic going from $z_0$ to $z_1$ is the segment $\gamma(t)=i y(t)$ with $y(t)=y_0 e^{t\ln\frac{y_1}{y_0}}$, and the logarithm map is given by 
\begin{equation*}
\log^{\mathbb H}_{z_0}(z_1)= i y_0 \ln\frac{y_1}{y_0}.
\end{equation*}
\item If $x_0 \neq x_1$, the geodesic is given by $\gamma(t) = x(t) + i y(t)$ with
\begin{equation*}
x(t) = \dfrac{ bd + ac \bar y(t)^2}{ d^2 + c^2 \bar y(t)^2}, \quad 
y(t) = \dfrac{ \bar y(t)}{ d^2 + c^2 \bar y(t)^2}, \quad t\in[0,1],
\end{equation*}
where the coefficients of the Moebius transformation can be deduced from the center $x_\Omega$ and the radius $R$ of the semi-circle going through $z_0$ and $z_1$: $a = \frac{1}{2} \left( \frac{x_\Omega}{R} +1\right)$, $b = x_\Omega - R$, $c = \frac{1}{2R}$, $d = 1$, and for all $t\in[0,1]$, 
\begin{equation*}
\bar y(t) = \bar{y_0} e^{Kt}, \quad \text{with } K = \ln\frac{\bar{y_1}}{\bar{y_0}}, \,\,\, \bar{y_0} = -i \frac{a z_0 + b}{c z_0 +d} \, \text{ and } \bar{y_1} = -i \frac{a z_1+ b}{c z_1 +d}. 
\end{equation*}
The logarithm map is in turn given by 
\begin{equation*}
\log^{\mathbb H}_{z_0}(z_1) = \frac{ 2 cd K \bar{y_0}^2 }{ ( d^2 + c^2 \bar{y_0}^2 )^2 } + i \frac{ K \bar{y_0} (d^2 - c^2 \bar{y_0}^2) }{ (d^2 + c^2 \bar{y_0}^2 )^2 }.
\end{equation*}
\end{itemize}
\end{proposition}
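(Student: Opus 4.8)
The plan is to reduce everything to the single vertical geodesic along the imaginary axis, exploiting the fact that the real Möbius group acts by isometries of $\mathbb H$.

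\textbf{Vertical case.} First I would treat $x_0=x_1$ directly. Writing the candidate $\gamma(t)=x_0+iy(t)$ and feeding $\dot\gamma=i\dot y$ into the covariant-derivative formula \eqref{cov}, the horizontal component $X$ vanishes identically and the geodesic condition $\nabla_{\dot\gamma}\dot\gamma=0$ collapses to the single scalar ODE $\ddot y=\dot y^2/y$. Its solutions are exactly $y(t)=y_0e^{\lambda t}$, and imposing $y(1)=y_1$ fixes $\lambda=\ln(y_1/y_0)$, which gives the stated parametrization. The logarithm map is then read off as the initial velocity $\log^{\mathbb H}_{z_0}(z_1)=\dot\gamma(0)=i\dot y(0)=iy_0\ln(y_1/y_0)$.

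\textbf{Reduction via Möbius maps.} For $x_0\neq x_1$ I would first record that any map $\mu(z)=\frac{az+b}{cz+d}$ with real coefficients and $ad-bc=1$ is an isometry of $\mathbb H$: this follows from $\operatorname{Im}\mu(z)=\operatorname{Im}(z)/|cz+d|^2$ together with $|\mu'(z)|=1/|cz+d|^2$, which makes the pullback of $|dz|^2/(\operatorname{Im}z)^2$ invariant. Consequently $\mu$ sends the imaginary-axis geodesic to another geodesic, i.e. to a vertical line or a semicircle centred on the real axis. For the explicit coefficients I would verify $ad-bc=1$ and compute the images of the two ends of the axis, $\mu(0)=b/d=x_\Omega-R$ and $\mu(\infty)=a/c=x_\Omega+R$; since these are precisely the endpoints of the diameter of the semicircle through $z_0$ and $z_1$, the image of the imaginary axis is exactly that semicircle. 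This pins down $\mu$ as the map carrying the model geodesic onto the one we want.

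\textbf{Transporting the parametrization and the log map.} Next I would pull the two endpoints back to the axis by solving $\mu(i\bar y_j)=z_j$, i.e. $i\bar y_j=\mu^{-1}(z_j)$, to obtain the heights $\bar y_0,\bar y_1>0$; apply the vertical-case result to get $\bar y(t)=\bar y_0e^{Kt}$ with $K=\ln(\bar y_1/\bar y_0)$; and push forward by $\mu$. A direct expansion of $\gamma(t)=\mu(i\bar y(t))=\frac{b+ia\bar y}{d+ic\bar y}$ (multiplying by the conjugate of the denominator and using $ad-bc=1$) yields the stated $x(t)$ and $y(t)$. For the logarithm map I would differentiate $\gamma(t)=\mu(i\bar y(t))$ by the chain rule, using $\mu'(z)=1/(cz+d)^2$ and $\dot{\bar y}(0)=K\bar y_0$, to get $\dot\gamma(0)=iK\bar y_0/(d+ic\bar y_0)^2$, and then rationalize.

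The conceptual content is entirely in the isometry reduction; after that the work is algebraic bookkeeping. The step I expect to be the main obstacle is the final simplification of $\dot\gamma(0)$: after expanding $(d+ic\bar y_0)^2=(d^2-c^2\bar y_0^2)+2icd\bar y_0$ and clearing the conjugate, the denominator must collapse through the identity $(d^2-c^2\bar y_0^2)^2+4c^2d^2\bar y_0^2=(d^2+c^2\bar y_0^2)^2$, producing the quoted real and imaginary parts. I would also be careful to define $\bar y_0,\bar y_1$ as the heights of the preimages $\mu^{-1}(z_j)$ on the axis, since this is exactly what makes $K$ real and the exponential parametrization valid.
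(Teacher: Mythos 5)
Your proposal is correct and follows essentially the same route as the paper: solve the geodesic ODE $\ddot y = \dot y^2/y$ for the vertical case, then conjugate by the real Möbius isometry sending the imaginary axis to the semicircle through $z_0$ and $z_1$, and differentiate at $t=0$ to read off the logarithm. The only differences are cosmetic — you verify the stated coefficients and justify the isometry property explicitly where the paper derives the coefficients from the endpoint conditions and takes the isometry for granted, and you obtain $\dot\gamma(0)$ by the chain rule through $\mu$ rather than by differentiating the coordinate formulas directly.
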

We now give the exponential map in $\mathbb H$.
%------------ PROPOSITION -------------
\begin{proposition}[Exponential map in $\mathbb H$]
\label{prop:expH}
Let $z_0 = x_0 + i y_0$ be an element of $\mathbb H$ and $u_0 = \dot{x_0} + i \dot{y_0}$ a tangent vector. Then the exponential map is given by $\exp^{\mathbb H}_{z_0}(u_0) = \gamma(1)$, where
\begin{itemize}
\item if $\dot{x_0} = 0$, $\gamma(t) = i y_0 e^{t \frac{\dot{y_0}}{y_0}}$, \vspace{0.5em}
\item if $\dot{x_0} \neq 0$, $\gamma(t) = x(t) + i y(t)$ with
\begin{equation*}
x(t) = \dfrac{ bd + ac \bar y(t)^2}{ d^2 + c^2 \bar y(t)^2}, \quad
y(t) = \dfrac{ \bar y(t)}{ d^2 + c^2 \bar y(t)^2}, \quad t\in[0,1].
\end{equation*}
The coefficients $a, b, c, d$ of the Moebius transformation can be computed as previously from the center $x_\Omega=x_0 + y_0 \frac{\dot{y_0}}{\dot{x_0}}$ and the radius $R=\sqrt{(x_0-x_\Omega)^2 + y_0^2}$ of the semi-circle of the geodesic, and for all $t\in[0,1]$,
\begin{equation*}
\bar y(t) = \bar{y_0} e^{t\frac{\dot{\bar{y_0}}}{\bar{y_0}}}, \quad \text{with } \bar{y_0} = -i \frac{a z_0 + b}{c z_0 +d} \, \text{ and } \dot{\bar{y_0}}=\frac{ \dot{x_0} (d^2 + c^2 \bar{y_0}^2)^2 }{ 2 cd \bar{y_0} }. 
\end{equation*}
\end{itemize}
\end{proposition}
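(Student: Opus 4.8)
The plan is to exploit two classical facts about $\mathbb H$: its geodesics are the vertical half-lines and the half-circles centred on the real axis, and the Moebius transformations $\phi(z)=\frac{az+b}{cz+d}$ with $ad-bc=1$ are orientation-preserving isometries of $\mathbb H$. Since isometries map geodesics to geodesics, the whole statement reduces to understanding the single vertical geodesic along the imaginary axis and then transporting it by a suitable $\phi$. Because a geodesic is uniquely determined by its initial position and velocity, in each case it suffices to exhibit a curve $\gamma$, verify that $\gamma(0)=z_0$ and $\dot\gamma(0)=u_0$, and check that $\gamma$ solves $\nabla_{\dot\gamma}\dot\gamma=0$.

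First I would treat the vertical case $\dot{x_0}=0$. Writing $\gamma(t)=x_0+iy(t)$, so that $v=\dot\gamma=i\dot y$, and substituting into the covariant derivative formula \eqref{cov}, the horizontal component vanishes automatically and the geodesic equation collapses to the scalar ODE $\ddot y=\dot y^2/y$ (with $x$ constant). The substitution $y=y_0e^{f(t)}$ turns this into $\ddot f=0$, so $f$ is affine; imposing $y(0)=y_0$ and $\dot y(0)=\dot{y_0}$ gives $y(t)=y_0e^{t\dot{y_0}/y_0}$, which is the claimed formula.

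For the generic case $\dot{x_0}\neq 0$, the geodesic is the half-circle through $z_0$ tangent to $u_0$, and I would first recover its data. Its Euclidean centre lies on the real axis at the point $x_\Omega$ making the radius $z_0-x_\Omega$ orthogonal to $u_0$; writing out this Euclidean dot product yields exactly $x_\Omega=x_0+y_0\dot{y_0}/\dot{x_0}$ and $R=\sqrt{(x_0-x_\Omega)^2+y_0^2}$. I would then take $\phi$ with the stated coefficients $a,b,c,d$ (which satisfy $ad-bc=1$) and confirm it sends the imaginary axis onto this half-circle: a direct computation of $\phi(i\bar y)$, using $ad-bc=1$, gives $\phi(i\bar y)=\frac{bd+ac\bar y^2}{d^2+c^2\bar y^2}+i\frac{\bar y}{d^2+c^2\bar y^2}$, precisely the asserted pair $(x(t),y(t))$ once $\bar y=\bar y(t)$. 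With $\bar\gamma(t)=i\bar y(t)$ the vertical geodesic of the first case, the curve $\gamma=\phi\circ\bar\gamma$ is then automatically a geodesic.

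It remains to fix the parametrization so that $\gamma(0)=z_0$ and $\dot\gamma(0)=u_0$. The first condition forces $\bar y_0=\phi^{-1}(z_0)=-i\frac{az_0+b}{cz_0+d}$, and by the vertical case $\bar y(t)=\bar y_0e^{t\dot{\bar y_0}/\bar y_0}$. The last unknown $\dot{\bar y_0}$ I would pin down from the velocity transfer $\dot\gamma(0)=\phi'(i\bar y_0)\,\dot{\bar\gamma}(0)$ together with $\phi'(z)=(cz+d)^{-2}$: taking the real part of $\frac{i\dot{\bar y_0}}{(d+ic\bar y_0)^2}$ and equating it to $\dot{x_0}$ produces $\dot{\bar y_0}=\frac{\dot{x_0}(d^2+c^2\bar y_0^2)^2}{2cd\bar y_0}$, as claimed. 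I expect the main obstacle to be purely computational bookkeeping, namely separating real and imaginary parts in the velocity transfer and checking that the coefficients built from $x_\Omega$ and $R$ really carry the imaginary axis to the correct half-circle with $ad-bc=1$; no conceptual difficulty arises beyond this, and the computation is essentially the inverse of the logarithm-map computation already performed for Proposition \ref{prop:geodH}.
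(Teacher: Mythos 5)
Your proposal is correct and follows essentially the same route the paper takes: it reuses the vertical-geodesic ODE and the Moebius-transformation parametrization from the proof of Proposition \ref{prop:geodH}, with the only change being that the circle's centre and radius are now determined from the initial velocity $u_0$ (via Euclidean orthogonality of $z_0-x_\Omega$ and $u_0$) rather than from a second endpoint, and $\dot{\bar y_0}$ is recovered from the velocity transfer $\dot\gamma(0)=\phi'(i\bar y_0)\,i\dot{\bar y_0}$. This is precisely the adaptation the paper alludes to when it says the proof is "very similar" to that of Proposition \ref{prop:geodH} with $u_0$ known instead of $z_1$.
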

Finally, we give the expression of parallel transport along a geodesic in the hyperbolic plane. 
%------------ PROPOSITION -------------
\begin{proposition}[Parallel transport in $\mathbb H$]
\label{prop:ptH}
Let $t \mapsto \gamma(t)$ be a curve in $\mathbb{H}$ with coordinates $x(t)$, $y(t)$, and $u_0\in T_{\gamma(t_0)}\mathbb H$ a tangent vector. The parallel transport of $u_0$ along $\gamma$ from $t_0$ to $t$ is given by
\begin{equation*}
u(t) = \frac{y(t)}{y(t_0)} \left( \begin{matrix}
\,\,\,\, \cos \theta(t_0,t) & \sin \theta(t_0,t) \\
-\sin \theta(t_0,t) & \cos \theta(t_0,t) \end{matrix} \right) u_0,
\end{equation*}
where $\theta(t_i,t_f) = \int_{t_i}^{t_f} \frac{\dot x(\tau)}{y(\tau)} \mathrm d\tau$. If $\gamma$ is a vertical segment then $\theta(t_i,t_f)=0$, and if it is a portion of a circle, we get
\begin{equation*}
\theta(t_i, t_f) = 2 \left( \arg(d + ic \bar y(t_f)) - \arg(d + ic \bar y(t_i)) \right),
\end{equation*}
where the coefficients $c$ and $d$ of the Moebius transformation can be computed as explained previously, and $\bar \gamma = i \bar y$ is the pre-image of $\gamma$ by that transformation.
\end{proposition}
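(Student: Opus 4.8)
The plan is to read off the parallel transport equation as a linear ODE and solve it explicitly, exploiting a commutativity that is special to the hyperbolic plane. Writing $u(t) = v_1(t) + i v_2(t)$, the condition $\nabla_{\dot\gamma} u = 0$ becomes, via the covariant derivative formula \eqref{cov}, the linear system
\begin{equation*}
\frac{\mathrm d}{\mathrm dt}\begin{pmatrix} v_1 \\ v_2 \end{pmatrix} = A(t)\begin{pmatrix} v_1 \\ v_2 \end{pmatrix}, \qquad A(t) = \frac{\dot y}{y}\, I + \frac{\dot x}{y}\, J, \quad J = \begin{pmatrix} 0 & 1 \\ -1 & 0 \end{pmatrix}.
\end{equation*}
First I would observe that $I$ and $J$ commute, so every $A(t)$ lies in the commutative algebra they generate and hence $A(t_1)A(t_2)=A(t_2)A(t_1)$ for all times. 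This is the crucial structural point: it means the time-ordered exponential solving the ODE collapses to the ordinary exponential of the integrated matrix, since the iterated commutators in the Magnus expansion all vanish.

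Then the solution with $u(t_0)=u_0$ is $u(t) = \exp\!\big(\int_{t_0}^t A(\tau)\,\mathrm d\tau\big)\,u_0$. Computing the two scalar integrals, $\int_{t_0}^t \frac{\dot y}{y}\,\mathrm d\tau = \ln\frac{y(t)}{y(t_0)}$ and $\int_{t_0}^t \frac{\dot x}{y}\,\mathrm d\tau = \theta(t_0,t)$ by definition of $\theta$. Using $J^2=-I$, one has $\exp(\lambda I + \theta J)=e^\lambda(\cos\theta\, I + \sin\theta\, J)$, which reproduces exactly the claimed scaled-rotation matrix with scaling factor $y(t)/y(t_0)$. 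The vertical-segment case is then immediate: there $\dot x\equiv 0$, so $\theta\equiv 0$ and only the conformal scaling survives.

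The remaining work, and the only genuinely computational step, is to evaluate $\theta$ along a circular geodesic using the Moebius parametrization of Proposition \ref{prop:geodH}, in which $x$ and $y$ are expressed through $\bar y$. Writing $w=\bar y$, I would differentiate $x = (bd + acw^2)/(d^2 + c^2 w^2)$ and collapse the numerator using $ad - bc = 1$ to get $\mathrm dx/\mathrm dw = 2cdw/(d^2 + c^2 w^2)^2$; dividing by $y = w/(d^2 + c^2 w^2)$ yields the clean integrand $(\mathrm dx/\mathrm dw)/y = 2cd/(d^2 + c^2 w^2)$. A substitution then gives $\theta(t_i,t_f) = 2\arctan\!\big(c\bar y/d\big)\big|_{t_i}^{t_f}$, and since $d>0$ one has $\arctan(c\bar y/d) = \arg(d + ic\bar y)$, which is precisely the stated formula. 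I expect the main obstacle to be not conceptual but bookkeeping: one must invoke $ad-bc=1$ at the right moment, without which the numerator of $\mathrm dx/\mathrm dw$ does not reduce to the single term that makes the integral elementary.
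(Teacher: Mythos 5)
Your proposal is correct and follows essentially the same route as the paper: reduce parallel transport to the linear ODE $\dot u = A(t)u$ with $A(t) = \tfrac{\dot y}{y}I + \tfrac{\dot x}{y}J$, use the commutativity of the $A(t)$ to write the solution as the exponential of the integrated matrix, and identify that exponential as the scaled rotation with factor $y(t)/y(t_0)$ and angle $\theta(t_0,t)$. Your explicit evaluation of $\theta$ along a circular geodesic via $\mathrm dx/\mathrm dw = 2cdw/(d^2+c^2w^2)^2$ and the substitution giving $2\arctan(c\bar y/d)=2\arg(d+ic\bar y)$ is a correct filling-in of a step the paper's proof leaves implicit.
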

Now that we have these explicit formulas at our disposal, we are able to test the algorithms described above in the simple case where the base manifold $M$ has constant sectional curvature $K=-1$. Note that computations are further simplified by the existence of a global chart.

\subsection{Spectral estimation of locally stationary radar signals}%---------------------------------------------------------------------------------
\label{spectralestimation}
In radar signal processing, given an observation of a signal, it is useful to estimate the spectrum of the underlying process, as it is indicative of its structure. If we are interested in the temporal modulations of that signal, we can estimate several spectra for that same signal and study their evolution in time. The study of these time-frequency spectra, or spectrograms, is at the heart of micro-Doppler analysis. Here we explain how a time series of spectra can be represented as a curve in the Poincar\'e polydisk.

The data we use for this example is synthetic data generated by a simulator of helicopter signatures. Using this simulator, we obtain a series $z=(z_1,\hdots ,z_N)\in \mathbb C^N$ of $N$ complex numbers that simulates the reflected signal received by a fixed radar antenna after sending a burst of $N$ pulses in the direction of a fixed helicopter. Given this vector of $N$ observations, the goal is to study the temporal evolutions of the underlying process. To do so, we consider that this process is locally stationary and Gaussian, and we estimate a spectrum for each stationary portion. More precisely, using a gliding window of size $n<N$ to be adjusted, we estimate a high resolution spectrum for each position of the window of size $n$ on the vector of size $N$. This gives us a time series of $N-n+1$ spectra, which we index by $1\leq i \leq N-n+1$.

Let us now explain how we estimate and represent these spectra, each of which corresponds to the observation $z^{(i)}=(z_i,\hdots ,z_{i+n-1})$ of a centered stationary Gaussian time series $Z^{(i)}$. To overcome the low resolution issues of the classical FFT-based spectral estimation methods, Burg suggested in the 1970s an alternative method based on autoregressive processes \cite{burg}. Given the partial knowledge of the autocorrelation function of a stationary and Gaussian process $Z$, Burg showed that the process which maximizes the entropy -- that is, which adds the fewest assumptions on the data -- is an autoregressive process of the appropriate order. Following this maximum entropy approach, we estimate an autoregressive spectrum for each stationary portion $z^{(i)}$ using the so-called Burg algorithm, see e.g. \cite{arn}. Burg also showed that the second order statistics of such a process $Z$ can be equivalently represented by its covariance matrix $\Sigma_n = \mathbb E(ZZ^*)$, a Toeplitz (because of the stationarity) Hermitian Positive Definite (THPD) matrix of size $n$, or the so-called \emph{reflection coefficients} $(P_0, \mu_1, \hdots, \mu_n)$ of the autoregressive model, as there exists a bijection \cite{tren}, \cite{ver}
$\Psi : \mathcal T_n \rightarrow \mathbb R_+^* \times D^{n-1}$,
\begin{equation*}
\Psi : \Sigma_n \mapsto (P_0,\mu_1,\hdots,\mu_{n-1}),
\end{equation*}
between the space $\mathcal T_n$ of THPD matrices of size $n$ and the product space $\mathbb R_+^* \times D^{n-1}$ where these reflection coefficients live. Here $D = \left\{ z \in \mathbb C \,|\, |z| <1 \right\}$ is the unit disk of the complex plane. This means that each stationary portion $z^{(i)}$ of size $n$ (the size of the gliding window) can be equivalently parameterized by its covariance matrix in $\mathcal T_n$ or by an element of the product space $\mathbb R_+^* \times D^{n-1}$. 

We choose to work with the latter representation, because we can select a convenient metric on that space. Indeed, the Legendre dual of the Fisher Information metric \cite{barb3}, defined on $\mathcal T_n$ as the hessian of minus the entropy,
\begin{equation*}
\Phi(\Sigma_n) = - \ln( \det \Sigma_n)-n\ln(2\pi e), \quad \Sigma_n \in \mathcal T_n,
\end{equation*}
has a nice expression in $\mathbb R_+^* \times D^{n-1}$, in which the Riemannian metric of the Poincar\'e disk appears \cite{barb1}
\begin{equation*}
ds^2 = n \left(\frac{dP_0}{P_0}\right)^2 + \sum_{k=1}^{n-1} (n-k) \frac{\left| d\mu_k \right|^2}{(1-\left|\mu_k\right|^2)^2},
\end{equation*}
with $(P_0,\mu_1,\hdots,\mu_{n-1})=\Psi(\Sigma_n)$. In other words, equipped with the Legendre dual of the Fisher information metric, the space of reflection coefficients becomes the product manifold $\mathbb R_+^* \times \mathbb D^{n-1}$, where $\mathbb D$ is the Poincar\'e disk. This means that each stationary portion $Z^{(i)}$ of our radar signal can be parameterized in the product manifold $\mathbb R_+^* \times \mathbb D^{n-1}$ by a set of coefficients $(P_0(i), \mu_1(i), \hdots, \mu_{n-1}(i))$, and so the entire locally stationary radar signal is represented by a time series in that space, which corresponds to a set of observations of the "real" evolution $(P_0(t), \mu_1(t), \hdots, \mu_{n-1}(t))$ of the locally stationary signal. 
%----------------- FIGURE -------------------
\begin{figure}
\centering
\subfloat{\includegraphics[width=0.45\textwidth]{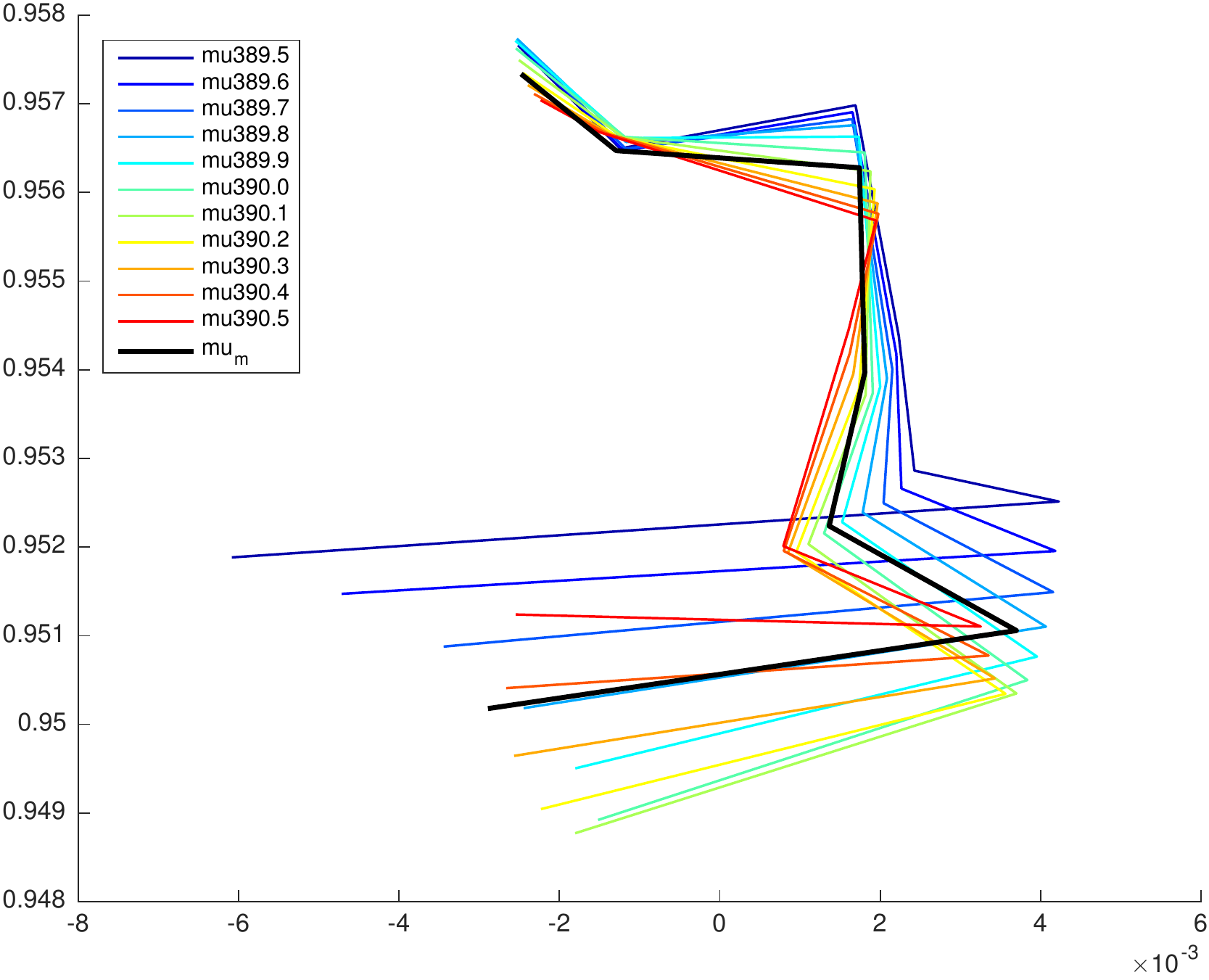}}
\subfloat{\includegraphics[width=0.45\textwidth]{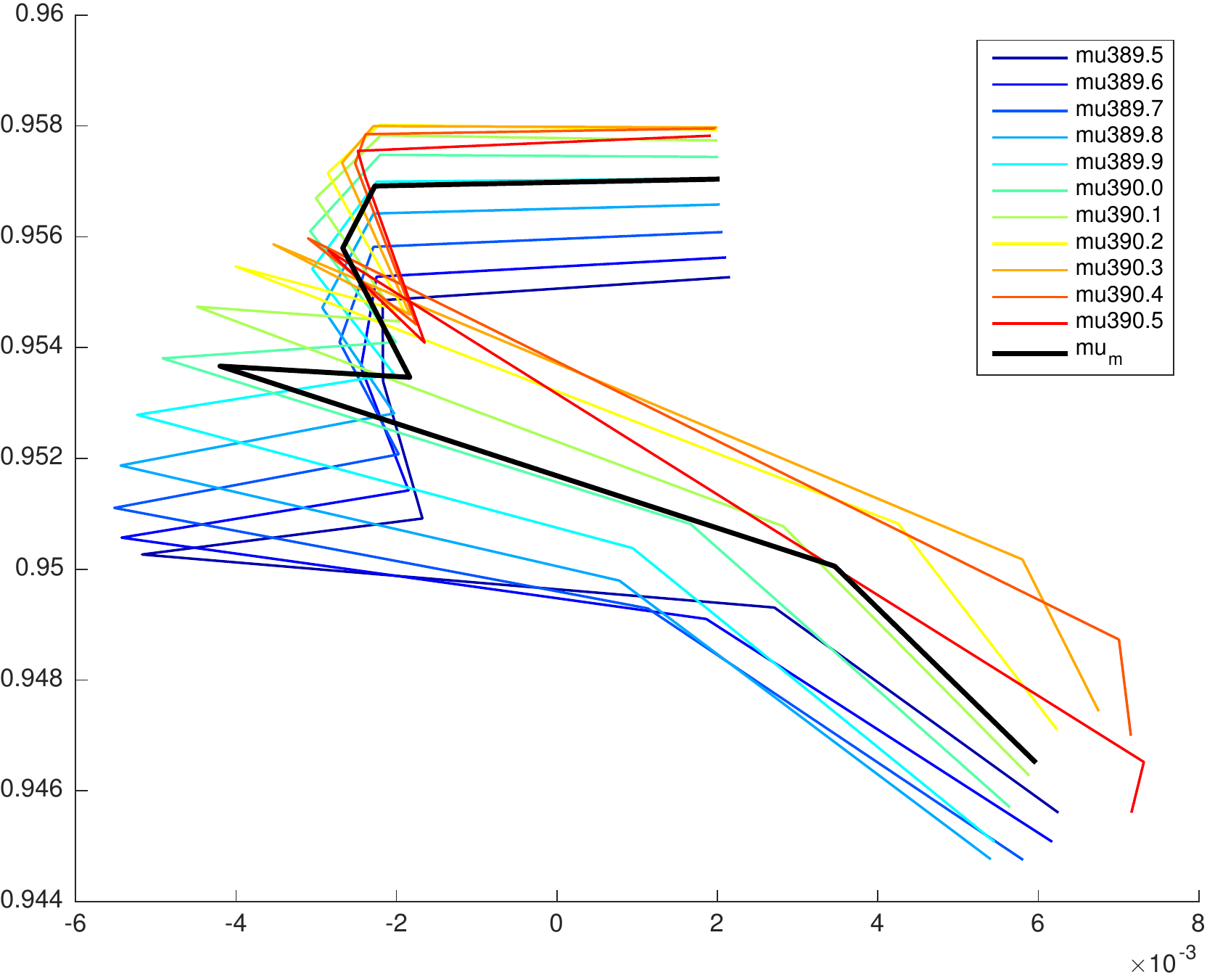}}\\ 
\subfloat{\includegraphics[width=0.45\textwidth]{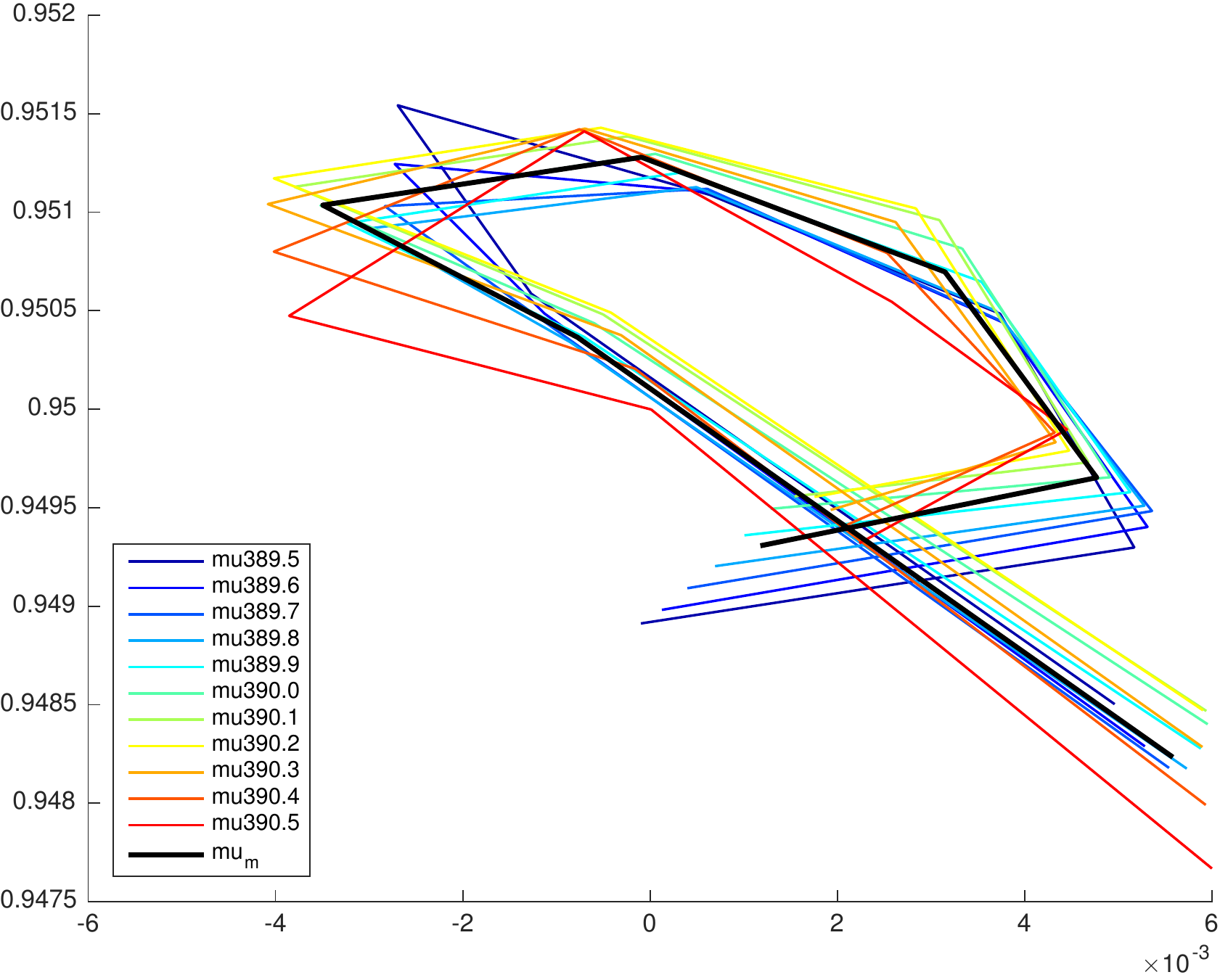}}
\subfloat{\includegraphics[width=0.45\textwidth]{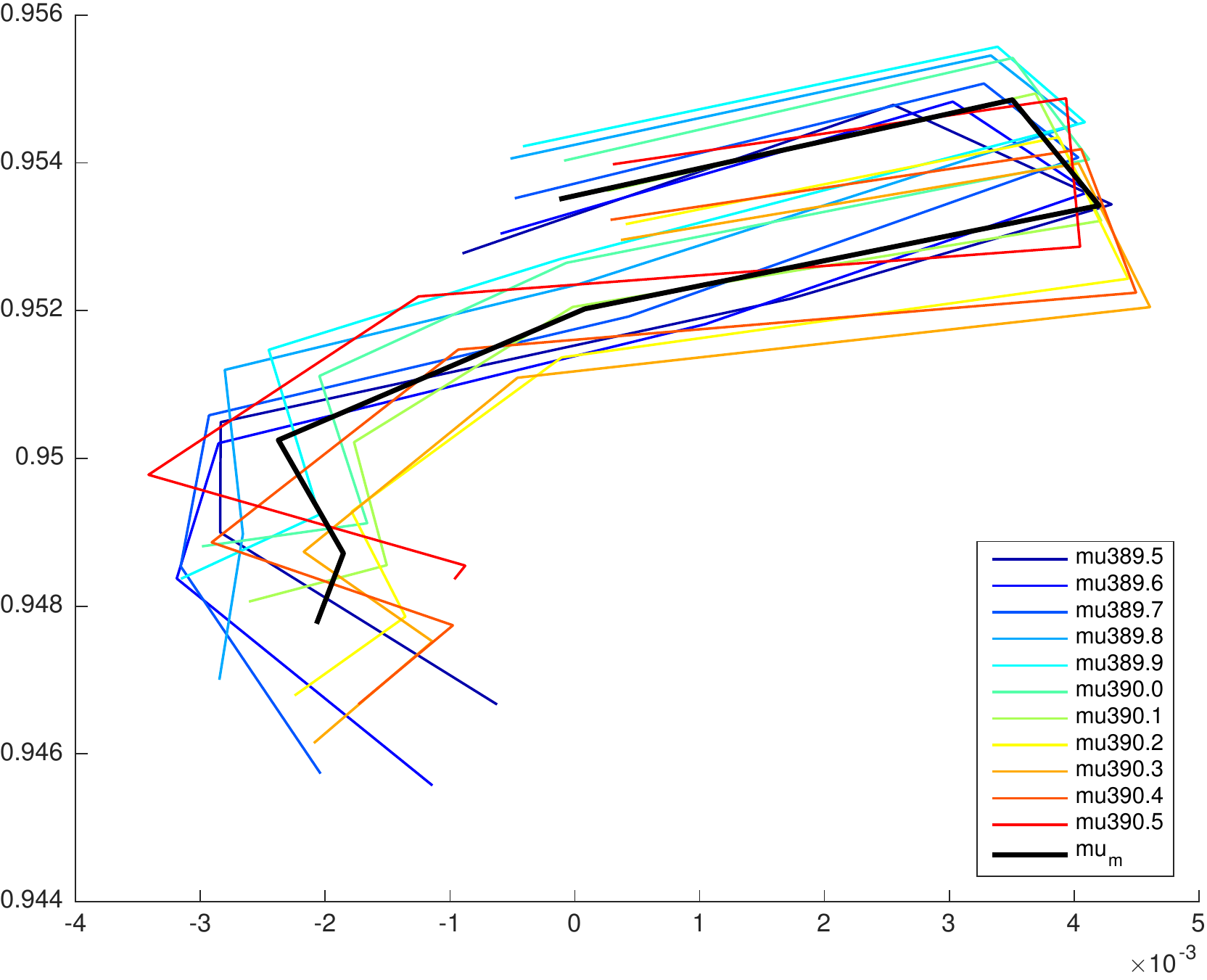}}
\caption{Computation of the mean curve (in black) for 4 sets of 11 curves in the hyperbolic half-plane, constructed from simulated helicopter radar data.}
\label{fig:karcherhelico}
\end{figure}

With this choice of representation, comparing two vectors of radar observations can be carried out by computing the distance between the two corresponding curves in the product manifold $\mathbb R_+^* \times \mathbb D$, which, thanks to the product metric, is the same as comparing their components separately -- that is, pairwise comparing the evolutions of each reflection coefficient $\mu_k(t)$ in the Poincar\'e disk. More generally, the representation of a vector of radar observation in a Riemannian manifold enables to do basic statistics on these objects, such as defining the mean, median and variance of a set, or performing classification. This can naturally be useful in target detection as well as target recognition. Here we use the algorithms presented in the previous sections to compute the Fr\'echet mean $\bar \mu(t)$ of $p$ curves $\mu^{(1)}(t), \hdots, \mu^{(p)}(t)$. The Fr\'echet mean, also called intrinsic mean, is defined by 
\begin{equation*}
\bar \mu =\underset{\mu \in \mathcal M}{\text{argmin}} \sum_{j=1}^{p} d(\mu,\mu^{(j)})^2,
\end{equation*}
if $\mathcal M$ is the space of curves in $\mathbb D$ and $d$ the distance on $\mathcal M$. Since it is defined as a minimizer of a functional, this intrinsic mean can be found by a gradient descent type procedure, summarized as follows.
% ---------------- ALGORITHM -----------------------
\begin{algorithm}[Mean of a set of curves] 
\label{alg:karcher}
\leavevmode\par \noindent
Input : $(\mu^{(j)}(t)$, $1\leq j\leq p)$. \\
Initialize $\bar \mu$. Repeat until convergence :
\begin{enumerate} 
\item For $j = 1, \hdots, p$, compute the geodesic $\gamma^{(j)}(s)$ linking $\bar \mu$ to $\mu^{(j)}$ using geodesic shooting (Algorithm \ref{alg:geodshoot}) and its initial tangent vector $\gamma_s^{(j)}(0)$.
\item Update the mean $\bar \mu$ in the direction of the sum of the initial speed vectors
\begin{equation*}
\bar \mu \leftarrow \exp^{\mathcal M}_{\bar \mu}\left(\frac{1}{p}\sum_{j=1}^p \gamma^{(j)}_s(0)\right),
\end{equation*}
using the exponential map (Algorithm \ref{alg:expmap}).
\end{enumerate}
Output : $\bar \mu(t)$.
\end{algorithm}
Using our radar data, we compute the Fr\'echet mean for $4$ sets of $11$ curves tracing the evolutions of one reflection coefficient of $11$ signals -- we choose to represent only one of the coefficients $\mu_k$, $1\leq k\leq n-1$. These signals are generated using the helicopter signature simulator, and correspond to the observation at $4$ different times of $11$ helicopters which differ only in their rotor rotation speeds. We consider small variations (less than $1\%$) around the mean value of $390$ RPM (rotations per minute), and show the obtained curves in Figure \ref{fig:karcherhelico}. Theses curves are shown in the hyperbolic half-plane representation, which is equivalent to the Poincar\'e disk in terms of geometry. In each case, the red extremity of the colormap corresponds to the helicopter with the highest rotation speed, the blue extremity to the lowest rotation speed, and the mean curve is shown in black. This can be used to construct a "reference signature" for a given type of helicopter, for target recognition purposes.

%-----------------------------------------------------------------------------------------------------------------------------------------------------------------------------
\section{Conclusion}
%-----------------------------------------------------------------------------------------------------------------------------------------------------------------------------

We have studied a first-order Sobolev metric $G$ on the space of manifold-valued curves and its induced geometry. The metric $G$ can be obtained as the pullback of a natural metric on the tangent bundle $\text{T}\mathcal{M}$ by the square root velocity function, and as such it is reparametrization invariant. The special role that $G$ gives to the starting points of the curves induces a fiber bundle structure over the manifold $M$ seen as the set of starting points of the curves, for which the projection is a Riemannian submersion. The geodesic distance induced by $G$ takes into account the distance between the origins of the curve in $M$ and the $L^2$-distance between the $SRV$ representations, without parallel transporting the computations to a unique tangent plane as in \cite{lb} and \cite{zhang}. This should allow us to take into account a greater amount of information on the geometry of the manifold $M$. Using the pullback form of $G$, explicit equations can be obtained for the geodesics, as well as for Jacobi fields, which allow us to construct the optimal deformation between two curves by geodesic shooting. Once we can compute geodesics in the space of curves, we can also compute the mean of a set of curves and conceivably more. We considered the case where the base manifold $M$ is the hyperbolic half-plane, whose geometry coincides with the Fisher geometry of gaussian densities, and tested the algorithms on simulated radar data for the spectral analysis of locally stationary gaussian radar signals. Future work will include applications on the sphere for the statistical analysis of large trajectories.

%-----------------------------------------------------------------------------------------------------------------------------------------------------------------------------
\section*{Acknowledgments}
%-----------------------------------------------------------------------------------------------------------------------------------------------------------------------------

 This research was supported by Thales Air Systems and the french MoD DGA.

%-----------------------------------------------------------------------------------------------------------------------------------------------------------------------------
\section*{Appendix}
%-----------------------------------------------------------------------------------------------------------------------------------------------------------------------------

\begin{proof}[Proof of Proposition \ref{prop:geodH}]
The geodesic $\gamma(t)=x(t)+iy(t)$ linking two points vertically aligned $z_0=x_0+iy_0$ and $z_1=x_0+iy_1$ is a vertical segment $\gamma(t)=iy(t)$. It verifies the geodesic equation $\nabla_{\dot\gamma(t)}\dot\gamma(t)=0$. Using the expression \eqref{cov} of the covariant derivative of a vector field in $\mathbb H$, this gives the equation $\ddot y y = \dot y^2$, which can be rewritten as $\frac{\ddot y}{\dot y}=\frac{\dot y}{y}$. Integrating twice, we find that $y(t)=y_0 e^{t\ln\frac{y_1}{y_0}}$.

Now if $z_1=x_1+iy_1$ with $x_1 \neq x_0$, the geodesic $\gamma$ is the image by a Moebius transformation $a \mapsto \frac{az+b}{cz+d}$ (with $ad-bc=1$) of a vertical line $\bar \gamma(t)=i \bar y(t)$, which gives
\begin{equation}
\label{eqgeodh}
x(t) = \dfrac{ bd + ac \bar y(t)^2}{ d^2 + c^2 \bar y(t)^2}, \quad 
y(t) = \dfrac{ \bar y(t)}{ d^2 + c^2 \bar y(t)^2}.
\end{equation}
We know that $\gamma$ describes a semi-circle $\Omega$ whose origin $x_\Omega$ is on the x-axis, and that one end of the vertical line $\bar \gamma$ is sent to the point $a/c$ and the other to the point $b/d$. This implies that the center of the semi-circle is half-way between the two $x_\Omega = \frac{ad+bc}{2cd}$, and that the radius is $R=\frac{1}{2cd}$. These two equations as well as the condition $ad-bc=1$ gives a system of equations for the coefficients $a, b, c$ and $d$, which, if we choose to set $d=1$, yields the desired expressions. If the extremity $z_0$ is sent by the inverse of the obtained Moebius transformation on $i\bar y_0$, and $z_1$ on $i\bar y_1$, then the segment corresponding to the portion of $\gamma$ linking $z_0$ to $z_1$ is $\bar \gamma(t) = \bar y_0 e^{t\ln\frac{\bar y_1}{\bar y_0}}$. Taking the derivative of \eqref{eqgeodh} in $t=0$ gives the logarithm map.
\end{proof}

\begin{proof}[Proof of Proposition \ref{prop:expH}]
The exponential map uses the same equations as the logarithm map with the difference that $u_0$ is known instead of $z_1$. The proof is very similar to the the proof of Proposition \ref{prop:geodH} and is not detailed here.
\end{proof}

\begin{proof}[Proof of Proposition \ref{prop:ptH}]
Parallel transporting a vector $u_0\in T_{\gamma(t_0)}\mathbb H$ along a curve $\gamma$ from $t_0$ to $t$ is gives a vector field $u$ satisfying $\nabla_{\dot\gamma(t)}u=0$ and $u(t_0)=u_0$. Using Equation \eqref{cov}, this can be rewritten $\dot u = A u$ where 
\begin{equation*}
A=\dfrac{1}{y}  \left( \begin{matrix}
\dot{y} & \dot{x} \\ -\dot{x} & \dot{y} \end{matrix} \right).
\end{equation*}
$A$ is of the form $aI+bK$ where $I$ is the identity matrix and $K=\left( \begin{smallmatrix} 0 & 1 \\ -1 & 0 \end{smallmatrix} \right)$ and so the solution is $u(t)=u(t_0)\exp \int_{t_0}^t A(\tau) \mathrm d\tau$, that is, $u(t)=u(t_0)\exp B(t)$ with
\begin{equation*}
B(t)=\left(
\begin{matrix}
\ln \frac{y(t)}{y(t_0)}  & \int_{t_0}^t \frac{\dot{x}(\tau)}{y(\tau)}\mathrm d\tau \\
-\int_{t_0}^t \frac{\dot{x}(\tau)}{y(\tau)} \, \text{\footnotesize{$d\tau$}} & \ln \frac{y(t)}{y(t_0)}
\end{matrix} \right).
\end{equation*}
The matrix $B(t)$ is diagonalizable and therefore its exponential can be written
\begin{equation*}
\exp B(t)= e^{a(t_0,t)}\left( \begin{matrix}
\cos{\theta(t_0,t)} & \sin{\theta(t_0,t)} \\
-\sin{\theta(t_0,t)} & \cos{\theta(t_0,t)} \end{matrix} \right),
\end{equation*}
where $a(t_0,t)=\ln \frac{y(t)}{y(t_0)}$ and $\theta(t_0,t)=\int_{t_0}^{t} \frac{\dot{x}(\tau)}{y(\tau)} \, \text{\footnotesize{$d\tau$}}$. This gives us the desired formula.
\end{proof}

%-----------------------------------------------------------------------------------------------------------------------------------------------------------------------------

\end{document}